\newcommand\ep{\epsilon}
\newcommand{\curl}[2]{\textrm{curl}_{#1}\hspace{0.05cm} #2}
\newcommand{\ws}{\mathbb{R}^3}
\newcommand{\norm}[2]{\Vert #1 \Vert_{#2}}
\renewcommand{\div}[2]{\textrm{div}_{#1}\hspace{0.05cm} #2}
\newcommand{\RR}{\mathbb{R}}
\newtheorem{thm}{\bf Theorem}[section]
\newtheorem{prop}{\bf Proposition}[section]
\newtheorem{lem}{\bf Lemma}[section]
\newtheorem{remark}{\bf Remark}[section]
\newtheorem{corollary}{\bf Corollary}[section]
\numberwithin{equation}{section}
\def\blfootnote{\xdef\@thefnmark{}\@footnotetext}
\begin{document}

\title{\sc Asymptotic behaviour of the spectra of systems of Maxwell equations in periodic composite media with high contrast}
\author[1]{Kirill Cherednichenko}
\author[1]{Shane Cooper}
\affil[1]{Department of Mathematical Sciences, University of Bath, Claverton Down, Bath, BA2 7AY, United Kingdom}






\maketitle


\begin{abstract}
	We analyse the behaviour of the spectrum of the system of Maxwell equations of electromagnetism, with rapidly oscillating periodic coefficients, subject to periodic boundary conditions on a ``macroscopic'' domain $(0,T)^d, T>0.$ We consider the case when the contrast between the values of the coefficients in different parts of their periodicity cell increases as the period of oscillations $\eta$ goes to zero. We show that the limit of the spectrum as $\eta\to0$ contains the spectrum of a ``homogenised'' system of equations that is solved by the limits of sequences of eigenfunctions of the original problem. We investigate the behaviour of this system and demonstrate phenomena not present in the scalar theory for polarised waves.
\end{abstract}

{\small {\bf Keywords:} Electromagnetism, Composites, Maxwell Equations, Spectrum, Homogenisation, Asymptotics}

\section{Introduction}

The behaviour of systems (of Maxwell equations) with periodic coefficients in the regime of ``high contrast'',  or ``large coupling'' {\it i.e.} when the ratio between material properties of some of the constituents within the composite is large, is understood to be of special interest in applications. This is due to the improved band-gap properties of the spectra for such materials compared to the usual moderate-contrast composites. A series of recent studies have analysed asymptotic limits
of scalar high-contrast problems, either in the strong $L^2$-sense  (see \cite{Zhikov2000}, \cite{Zhikov2004}) or in 
the norm-resolvent  $L^2$-sense,  see \cite{CC_ARMA}.  These have resulted in sharp operator convergence estimates in the homogenisation of such problems ({\it i.e.} in the limit as the period tends to zero) and have provided a link between the study of effective properties of periodic media and the behaviour of waves in such media, in particular their scattering characteristics. This suggests a potential for applications of the abstract operator theory to the study of such problems.  The studies  have also highlighted the need  to extend  the classical compactness techniques in homogenisation to cases when the symbol of the operator involved is no longer uniformly positive definite, thus leading to ``degenerate'' problems. The work \cite{KamSmysh2013} has opened a way to one such extension procedure, based on a  ``generalised Weyl decomposition", from the perspective of the strong $L^2$-convergence.  

 The set of tools developed in the literature is now poised for the treatment of vector problems with degeneracies such as the linearised elasticity equations and the Maxwell equations; these examples are typically invoked in the physics and applications literature, and are prototypes for wider varieties of partial differential equations (PDE).  The recent work \cite{ZhPast2013} has studied the spectral behaviour of periodic operators with rapidly oscillating coefficients in the context of linearised elasticity. It shows that the related spectrum exhibits the phenomenon of ``partial'' wave propagation, depending on the number of eigenmodes available at each give frequency. This is close in spirit to the work of \cite{SmyshlyaevPartial}, where ``partial wave propagation'' was studied for a wider class of vector problems, with a general high-contrast anisotropy. 

The high-contrast system of Maxwell equations poses an analytic challenge in view of the special structure of the 
``space of microscopic oscillations" (using the terminology of  \cite{KamSmysh2013}), which consists of the functions that are 
curl-free on the ``stiff'' component, in the case of a two-component composite of a ``stiff'' matrix and ``soft'' inclusions. In the work \cite{maxCh} we analysed the two-scale structure of solutions to the high-contrast system of Maxwell equations in the low-frequency limit,  and derived the corresponding system of homogenised equations, by developing an appropriate compactness argument on the basis of the general theory of  \cite{KamSmysh2013}. In the present paper we consider the associated wave propagation problem for monochromatic waves of a given frequency  by constructing two-scale asymptotic series for eigenfunctions. We justify these asymptotic series by demonstrating that for each element of the spectrum of the homogenised equations their exist convergent eigenvalues and eigenfunctions for the original heterogeneous problem. Our analysis is set in the context of a ``supercell'' spectral problem, {\it i.e.} the problem of vibrations of a square-shaped domain with periodicity conditions on the boundary (equivalently seen as a torus).
The problem of the ``spectral completeness'' of the homogenised description in question remains open: it is not known, for the full-space problem, whether there may exist sequences of eigenvalues converging to a point outside the spectrum of the homogenised problem. We shall address this in a future publication, using the method we developed in \cite{CC_ARMA}.

\section{Problem formulation and main results}

In this paper we consider Maxwell equations for a  three-dimensional two-component periodic dielectric composite when the dielectric properties of the constituent materials exhibit a high degree of contrast between each other. 
We assume that the reference cell $Q:=[0,1)^3$ contains an inclusion $Q_0,$ which is an open set with sufficiently smooth boundary.
We also assume that the ``matrix'' $Q_1 : = Q \backslash \overline{Q_0}$ is simply connected Lipschitz set. 

We consider a composite with high contrast in the dielectric permittivity $\epsilon_\eta=\epsilon(x/\eta)$ at points 
$x\in\eta(Q_1+m),$ $m\in{\mathbb Z}^3,$ and $x\in\eta(Q_0+m),$ $m\in{\mathbb Z}^3,$ namely
$$
\ep_\eta(y) = \left\{ \begin{array}{cr} \eta^{-2}\epsilon_0(y), & y \in Q_0,\\[0.3em]

\epsilon_1(y),  & y \in Q_1,   \end{array} \right.
$$
where $\eta\in(0,1)$ is the period and $\epsilon_0,$ $\epsilon_1$ are continuously differentiable $Q$-periodic positive-definite scalar functions. 

We also assume moderate contrast in the magnetic permeability, and for simplicity of exposition we shall set $\mu \equiv 1$.  We consider the open cube ${\mathbb T}:=(0,T)^3$ and those values of the parameter $\eta$ for which $T/\eta\in{\mathbb N}.$ By re-scaling the spatial variable (which can also be viewed as  non-dimensionalisation) we assume that $T=1$ and that $\eta^{-1}\in{\mathbb N}.$   We shall study the behaviour of the magnetic component $H^\eta$ of the electromagnetic wave of frequency $\omega$ propagating through the domain $\mathbb T$ occupied by a dielectric material with permittivity $\ep_\eta(x/\eta).$ More precisely, 
we consider pairs $\bigl(\omega_\eta, H^\eta\bigr) \in \mathbb{R}_+ \times [H^1_\#(\mathbb T)]^3$ satisfying the system of equations
\begin{equation}
\label{e1.1}
\curl{}{\bigl( \ep_\eta^{-1}\bigl(\tfrac{x}{\eta}\bigr) \curl{}{H^\eta} \bigr)} = \omega_\eta^2 H^\eta.
\end{equation} 
Notice that solutions of (\ref{e1.1}) are automatically solenoidal, {\it i.e.} ${\rm div} H^\eta=0.$

We seek solutions to the above problem in the form of an asymptotic expansion
\begin{equation}
\label{e1.2}
H^\eta(x) = H^0 \bigl(x,\tfrac{x}{\eta}\bigr) + \eta H^1 \bigl(x , \tfrac{x}{\eta} \bigr) + \eta^2 H^2 \bigl(x,\tfrac{x}{\eta} \bigr)+ ...,
\end{equation}
where the vector functions $H^{j}(x,y),$ $j=0, 1, 2,...,$ are $Q$-periodic in the variable $y.$ Substituting \eqref{e1.2} into \eqref{e1.1} and gathering the coefficients for each power of the parameter $\eta$ results in a system of recurrence relations for $H^j,$ $j=0, 1, 2,...,$ see Section \ref{expansionsection}. In particular, the function $H^0$ is an eigenfunction of a limit (``homogenised'') system of PDE, as described in the following theorem.
\begin{thm}
	\label{maintheorem}
	Consider the constant matrix
	\[
	A^{\rm hom} : = \int_{Q_1}\epsilon_1^{-1}(y)\big({\rm curl}N(y) + I \big) \,\mathrm{d}y,
	\]
	where the vector-function $N$ is a solution to the ``unit-cell problem''
	\begin{equation}
	{\rm curl}\Bigl(\epsilon_1^{-1}\bigl[{\rm curl}N+I\bigr]\Bigr)=0\ \ \ {\rm in}\ Q_1, \ \ \ \epsilon_1^{-1}\bigl({\rm curl}N+I\bigr)\times n=0\ \ {\rm on}\ {\partial Q_0},\ \ \ N\ {\rm is}\ \text{$Q$-{\rm periodic}},
	\label{ELorig}
	\end{equation}
	where $n$ is the exterior normal to $\partial Q_0.$
	
	Suppose that $\omega\in{\mathbb R}_+$ and $H^0(x,y) = u(x) + \nabla_y v(x,y) + z(x,y),$ where the triplet\footnote{For a cube  ${\mathbb T},$ we denote by $H^1_\#(\mathbb T),$ $H^1_{\rm \# curl}(\mathbb T),$ the 
		closures of the set of $\mathbb{T}$-periodic smooth functions with respect to the norm of $H^1(\mathbb T)$ and the norm 
		\[
		\biggl(\int_{\mathbb T}\vert\cdot\vert^2+\int_{\mathbb T}\vert{\rm curl}\cdot\vert^2\biggr)^{1/2},
		\] 
		respectively.}
	$(u,v,z) \in \bigl[H^1_{\rm \# curl}(\mathbb T)\bigr]^3 \times L^2\bigl(\ws ; H^1_\#(Q)\bigr) \times\bigl[L^2\bigl(\mathbb T; H^1_0(Q_0)\bigr)\bigr]^3,$ satisfies the system of equations
	\begin{alignat}{3}
	{\rm curl}_{x}\bigl(A^{\rm hom}{\rm curl}_{x} u(x)\bigr)  = \omega^2 \Bigl(u(x) + \int_{Q_0}z (x,y) dy \Bigr) , &  \qquad& x\in{\mathbb T},\label{me1}  \\[0.2em]
	{\rm div}_{y}\big( \nabla_y v(x,y) + z(x,y) \big)  = 0 \label{me2}, &\qquad & (x,y)\in{\mathbb T}\times Q, \\[0.6em]
	{\rm curl}_{y}\bigl(\epsilon_0^{-1}(y){\rm curl}_{y}z(x,y)\bigr) = \omega^2 \big( u(x) + \nabla_y v(x,y) + z(x,y) \big), &\qquad & 
	(x,y)\in {\mathbb T}\times Q_0.\label{me3}
	\end{alignat}
	
	Then: 
	
	1) There exists at least one eigenfrequency $\omega_\eta$ 
	for (\ref{e1.1}) such that 
	$\label{omegavicinity}
	\vert \omega_\eta-\omega\vert<C\eta,$
	with an $\eta$-independent constant $C>0.$
	
	2) Consider the finite-dimensional vector space 
	\[
	X_\eta:={\rm span}\bigl\{H^\eta: (\ref{e1.1}){\rm \ holds,\ where\ } \omega_\eta {\rm \ satisfies}\ (\ref{omegavicinity})\bigr\}.
	\]
	There exists  an  $\eta$-independent constant $\widehat{C}>0$ such that 
	${\rm dist}\bigl(H^0, X_\eta\bigr)<\widehat{C}\eta$.

\end{thm}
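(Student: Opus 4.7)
The strategy is to turn the formal two-scale ansatz \eqref{e1.2} into a rigorous approximate eigenpair $(\omega^2, \widehat H^\eta)$ for the operator $A_\eta := {\rm curl}(\ep_\eta^{-1}\,{\rm curl}\,\cdot\,)$ (considered on the solenoidal subspace of $[L^2({\mathbb T})]^3$, where it is self-adjoint and nonnegative), and then to invoke the classical near-spectrum lemma for self-adjoint operators.

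I first carry out the formal asymptotics. Substituting \eqref{e1.2} into \eqref{e1.1} and separating powers of $\eta$ yields a cascade of equations. The orders $\eta^{-2}$ and $\eta^{-1}$ force ${\rm curl}_y H^0=0$ on $Q_1$ and ${\rm div}_y H^0=0$ on $Q$, which, via a Weyl-type decomposition adapted to the high-contrast geometry (as in \cite{KamSmysh2013, maxCh}), dictates the three-part representation $H^0(x,y)=u(x)+\nabla_y v(x,y)+z(x,y)$ with $z$ supported in $Q_0$. The $\eta^{-1}$ balance inside $Q_1$ determines $H^1|_{Q_1}$ in terms of ${\rm curl}_x u$ via the corrector $N$ of \eqref{ELorig}, whose average generates the effective matrix $A^{\rm hom}$. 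Enforcing the solvability (Fredholm) conditions at order $\eta^{0}$ separately on the two phases, together with matching of tangential traces of $\ep_\eta^{-1}{\rm curl}\widehat H^\eta$ on $\partial Q_0$, yields precisely the homogenised system \eqref{me1}--\eqref{me3}.

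Given the homogenised datum $(u,v,z)$, I then build a genuine trial function $\widehat H^\eta(x):=H^0(x,x/\eta)+\eta H^1(x,x/\eta)+\eta^2 H^2(x,x/\eta)$, with $H^1,H^2$ selected so that the cascade is satisfied to the required order. Because $T/\eta\in{\mathbb N}$, no boundary-layer corrector is needed and $\widehat H^\eta\in [H^1_\#({\mathbb T})]^3$; a small divergence-cleaning correction (of order higher than $\eta$) brings the trial function into the form-domain of $A_\eta$ without altering the leading profile. The resulting residual satisfies
\[
\bigl\|(A_\eta-\omega^2)\widehat H^\eta\bigr\|_{L^2({\mathbb T})} \;\le\; C\,\eta^2\,\bigl\|\widehat H^\eta\bigr\|_{L^2({\mathbb T})},
\]
where the improvement from $O(\eta)$ to $O(\eta^2)$ is obtained by carrying the expansion through order $\eta^2$ and using \eqref{me3} to cancel the $\eta^{-2}$ amplification of $\ep_\eta^{-1}$ on $Q_0$. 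The near-spectrum lemma with $\delta=O(\eta^2)$ gives $\mathrm{dist}(\omega^2,\sigma(A_\eta))\le C\eta^2$, hence $|\omega_\eta-\omega|\le C'\eta$, proving part~1. For part~2, the same lemma yields $\bigl\|\widehat H^\eta-P_\eta \widehat H^\eta\bigr\|_{L^2}\le (\delta/d)\bigl\|\widehat H^\eta\bigr\|_{L^2}$, where $P_\eta$ is the spectral projector onto eigenvalues in $(\omega^2-d,\omega^2+d)$; choosing $d=c\eta$ so that the window cuts out exactly $X_\eta$ gives an $O(\eta)$ bound, and combining with $\|\widehat H^\eta-H^0\|_{L^2}=O(\eta)$ delivers $\mathrm{dist}(H^0,X_\eta)\le \widehat C\eta$.

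The principal obstacle is the construction of $H^2$ and the attendant $O(\eta^2)$ residual bound in this degenerate, two-phase geometry. The amplification factor $\eta^{-2}$ on $Q_0$ means any pointwise error there is magnified quadratically, so the higher-order cell problems must be solved with precise compatibility (enforced by \eqref{me2}, \eqref{me3}) while simultaneously preserving tangential continuity across $\partial Q_0$ and global solenoidality; moreover, the curl-free constraint on the stiff phase deprives us of standard elliptic regularity on $Q_1$, forcing a careful use of Helmholtz-type decompositions. The framework of \cite{KamSmysh2013} and the two-scale curl analysis of \cite{maxCh} provide the requisite function-space tools; assembling them into a single uniform residual estimate is the heart of the argument.
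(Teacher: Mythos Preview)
Your overall architecture (build a two-scale quasi-mode from the homogenised eigenfunction, apply a near-spectrum lemma) matches the paper's, but the residual order you claim is wrong, and this breaks your argument for part~2.

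With only $H^0,H^1,H^2$ in the ansatz, the residual is $O(\eta)$, not $O(\eta^2)$. The paper computes this explicitly: writing the weak defect as $\int_{\mathbb T}F^1\cdot\varphi+\int_{\mathbb T}F^2\cdot\eta\,{\rm curl}\,\varphi$, the leftover order-$\eta$ terms include, for instance, $\chi_0\,{\rm curl}_x(\epsilon_0^{-1}{\rm curl}_y H^0)$ on the soft phase, $\chi_1\,{\rm curl}_x(\epsilon_1^{-1}{\rm curl}_y H^2)$ on the stiff phase, and $-\omega^2\eta H^1$ everywhere. None of these is cancelled by \eqref{me3}; cancelling them would require an $H^3$ corrector and a further layer of cell problems. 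Consequently your part~2 step ``$\delta/d$ with $\delta=O(\eta^2)$, $d=c\eta$'' collapses to $\delta/d=O(1)$ and yields no $O(\eta)$ bound on ${\rm dist}(H^0,X_\eta)$.

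There is a second, related gap: $\widehat H^\eta$ is generally not in $D(A_\eta)$. The transmission conditions \eqref{e1.4}, \eqref{e2.2}, \eqref{e3.2} match the tangential trace of $\epsilon_\eta^{-1}{\rm curl}\,\widehat H^\eta$ across $\eta\,\partial Q_0$ only through order $\eta$; the order-$\eta^2$ mismatch produces surface singularities in ${\rm curl}(\epsilon_\eta^{-1}{\rm curl}\,\widehat H^\eta)$, so the strong $L^2$ norm you invoke is not even defined. Your divergence-cleaning step repairs solenoidality but not this transmission defect.

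The paper avoids both issues by passing to the bounded resolvent. It defines $\widetilde H^\eta:=({\mathcal A}_\eta+I)^{-1}(\omega^2+1)H^0(\cdot,\cdot/\eta)$ and proves the \emph{form} estimate ${\mathfrak b}_\eta\bigl(\widetilde H^\eta-H^{(2)},\varphi\bigr)\le C\eta\sqrt{{\mathfrak b}_\eta(\varphi,\varphi)}$, from which $\|\widetilde H^\eta-H^0(\cdot,\cdot/\eta)\|_{L^2}\le C\eta$ follows. This is exactly a quasi-mode statement for the \emph{bounded} self-adjoint operator $({\mathcal A}_\eta+I)^{-1}$ at the point $(\omega^2+1)^{-1}$, and the Vishik--Lyusternik lemma then delivers both the spectral distance and the eigenspace distance at order $\eta$. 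No strong $L^2$ residual for the unbounded $A_\eta$ is ever needed, and the $O(\eta)$ precision of the expansion suffices.
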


	\label{ahomrem}
	The matrix $A^{\text{\rm hom}}$ is described by solutions to certain degenerate ``cell problems", as follows.
	Consider the spaces
	\begin{equation}
\label{spaceV}
V : = \Big\{v \in [H^1_{\#}(Q)]^3 : \text{$\curl{}\, {v =0}$ in $Q_1$} \Big\}
\end{equation}
	and $V^\perp,$ the orthogonal complement of $V$ in $[H^1_\#(Q)]^3$ with respect to the equivalent $H^1$-norm
\begin{equation*}
\norm{v}{H} : = \biggl(\left\vert\int_Q v \right\vert^2 + \int_Q \vert \nabla v \vert^2\biggr)^{1/2},
\end{equation*}
associated with the inner product
\begin{equation*}
(v,w)_H : =  \biggl(\int_Q v\biggr)\cdot\biggl(\int_Q w\biggr) + \int_Q  \nabla v \cdot \nabla w.
\end{equation*}

Then 
\begin{equation}
\label{gamma}
A^{\rm hom}\xi = \int_{Q} \epsilon_1^{-1} \left( {\rm curl}{N}_\xi + \xi \right),\ \ \ \ \xi\in{\mathbb R}^3,
\end{equation}
where $N_\xi,$ $\xi\in{\mathbb R}^3,$
is the unique (weak) solution in $V^\perp$ to the problem  (\ref{ELorig}), i.e.
\begin{equation}
	\label{ahom1.1}
	\int_Q \epsilon^{-1}_1 \left( \curl\,N_\xi + \xi\right)  \cdot \curl\,{\varphi} = 0, \qquad \forall\varphi \in V^\perp.
\end{equation}
Existence and uniqueness of $N_\xi$ is discussed in Section \ref{expansionsection}.

Notice that
\begin{equation}
A^{\rm hom}\xi\cdot\xi=\min_{U\in[H^1_{\#}(Q)]^3}\int_{Q_1}\epsilon_1^{-1}({\rm curl}\,U+\xi)\cdot({\rm curl}\,U+\xi),\ \ \ \xi\in{\mathbb R}^3.
\label{newrep}
\end{equation}
Indeed, 
for the functional
$$
F_\xi(U) : = \int_{Q_1}\epsilon_1^{-1}({\rm curl}\,U+\xi)\cdot({\rm curl}\,U+\xi),
$$ 
we find $F_\xi(U) = F_\xi(P_{V^\perp}U)$ for all $U\in [H^1_{\#}(Q)]^3$, where $P_{V^\perp}$ is the orthogonal projection onto $V^\perp$. Therefore, without loss of generality, $F_\xi$ can be minimised on $V^{\perp}$ for which \eqref{ahom1.1} is the corresponding Euler-Lagrange equation.

The variational formulation (\ref{newrep}) allows one to obtain a representation for 
		the matrix  $ \epsilon^{\rm hom}_{\rm stiff}$ such that
	\begin{gather}
	\epsilon^{\rm hom}_{\rm stiff} \xi \cdot \xi : = \inf_{ \substack{ u \in H^1_{\#}(Q), \\ \nabla u  = -\xi \,{\rm in}\,Q_0}} \int_{Q_1}\epsilon_1\left(  \nabla u + \xi \right) \cdot \left( \nabla u + \xi \right),\ \ \ \ \xi\in{\mathbb R}^3, \label{stiffmatrix}
	\end{gather}
	which arises in the homogenisation of periodic problems with stiff inclusions, see \cite[Section 3.2]{JKO}.\footnote{The Euler-Lagrange equation for  \eqref{stiffmatrix} is as follows:  find $u$ such that $\nabla u = - \xi$ in $Q_0$ and 
	$$
	\int_{Q_1}\epsilon_1\left( \nabla u + \xi \right) \cdot \nabla \phi = 0\ \ \ \ \ \ \forall\phi \in H^1_{\#}(Q),\ \ \ 
	\nabla \phi = 0\ {\rm in}\ Q_0.$$ 
	The equivalent ``strong'' form of the same problem is to find a $Q$-periodic function $u$ such that
	\begin{equation*}
	{\rm div}\left(\epsilon_1\left( \nabla u + \xi \right)\right)= 0 \ \  {\rm in}\ Q_1,\ \ \ \
	\int_{\partial{Q_0}}\epsilon_1\left( \nabla u + \xi \right) \cdot n  = 0,\ \ \  u\ {\rm is\ continuous\ across\ }\partial Q_0,\ \ \   
	\nabla u  = - \xi,\ \ {\rm in}\ Q_0. 
	\end{equation*}
}

		Indeed, as shown in \cite[p.\,101]{JKO}, 
	the following representation holds:
\begin{equation}
	\bigl( \epsilon^{\rm hom}_{\rm stiff} \bigr)^{-1}\xi\cdot\xi = \inf_{ \substack{ v \in [L^2(Q)]^3_{\rm sol},  \\ \langle v\rangle = 0}} \ \ \int_{Q_1}\epsilon_1^{-1}\left(v+\xi\right) \cdot \left(v+\xi\right),\ \ \ \xi\in{\mathbb R}^3.
	\label{v_rep}
	\end{equation}
Notice that for each vector $v$ in (\ref{v_rep}) there exists $U_v\in[H^1_{\#}(Q)]^3$ such that $v={\rm curl}\,U_v,$ see \cite[pp. 6--7]{JKO}, and hence  
\[
\int_{Q_1}\epsilon_1^{-1}\left(v+\xi\right) \cdot \left(v+\xi\right)=\int_{Q_1}\epsilon_1^{-1}\bigl({\rm curl}(P_{V^\perp}U_v)+\xi\bigr) \cdot \bigl({\rm curl}(P_{V^\perp}U_v)+\xi\bigr).
\]
It follows that for all $\xi\in{\mathbb R}^3$ one has
\[
\bigl( \epsilon^{\rm hom}_{\rm stiff} \bigr)^{-1}\xi\cdot\xi = \inf_{U\in[H^1_{\#}(Q)]^3}\int_{Q_1}\epsilon_1^{-1}\bigl({\rm curl}(P_{V^\perp}U)+\xi\bigr)\cdot 
\bigl({\rm curl(}P_{V^\perp}U)+\xi\bigr)=A^{\rm hom}\xi\cdot\xi.
\]
\section{On the spectrum of the limit problem}
In this section we study the set of values $\omega^2$ such that there exists a non-trivial triple $(u,v,z)$ solving the two-scale limit spectral problem \eqref{me1}--\eqref{me3}.
\subsection{Equivalent formulation and spectral decomposition of the limit problem}
Let $G$ be the Green function for the scalar periodic Laplacian, {\it i.e.} for all $y \in Q$ one has
\begin{equation*}
-\Delta G(y)=\delta_{0}(y)-1, \qquad y \in Q, \\
\hspace{1.3cm}\text{$G$ is $Q$-periodic,}
\end{equation*}
where $\delta_0$ is the Dirac delta-function supported at zero, on $Q$ considered as a torus. Then, as the functions $v,$ $z$ solve \eqref{me2}, we have 
$v(x,\cdot) = {G*({\rm{div_y}}_{}z)}(x,\cdot),$ and \eqref{me3} takes the form
\begin{equation}
\curl{y}\bigl(\epsilon_0^{-1}(y){\curl{y}{z(x,y)}}\bigr)= \omega^2 \left( u(x) +  \nabla_{y}\int_{Q_0} { G}(y-y')\,{\rm div}_{y'} z(x,y')\,dy' + z(x,y) \right), \ \ \ \ (x,y)\in{\mathbb T}\times Q_0. 
\label{Peqn_0}
\end{equation}
For the case $\omega=0$ the set of solutions $z$ to (\ref{Peqn_0}) subject to the condition $z(x, y)=0,$ $x\in{\mathbb T},$ 
$y\in\partial Q_0,$ is clearly given by $L^2({\mathbb T}, {\mathcal H}_0),$ where ${\mathcal H}_0:=\{u\in[H^1_0(Q_0)]^3: {\rm curl}\,u=0\}.$ 

Further, for $\omega\neq 0,$ as (\ref{Peqn_0}) is linear in $u(x)$ and ${\rm curl}_y\nabla_y=0$, we set 
\begin{equation}
  \nabla_{y}\int_{Q_0}{ G}(y-y')\,{\rm div}_{y'} z(x,y')\,dy'+z(x,y)=\omega^2B(y)u(x),
\label{decompose}
\end{equation}
where $B$ is a $3\times3$ matrix function whose column vectors $B^{j}$, $j=1,2,3,$ are solutions in $[H^1_{\#}(Q)]^3$ to the system
\begin{eqnarray}
& &\curl{}\bigl(\epsilon_0^{-1}{\curl{}{B^j}}\bigr) =e_j+ { \omega^2 B^j} \ \ \ \  {\rm in}\ {Q_0},\label{Peqn}\\[0.4em]
& &
{\rm curl}\,B^j=0,\ \ {\rm in}\ Q_1,\label{cond1}\\[0.4em]
& &{\rm div}\,B^j=0,\ \ {\rm in } \ Q,\label{cond2}\\[0.4em]
& &a(B^j)=0,\label{cond3}
\end{eqnarray} 
where $e_j,$ $j=1,2,3,$ are the Euclidean basis vectors and $a(B^j)$ is the ``circulation'' of $B^j,$ { that is defined as the continuous extension, in the sense of the $H^1$ norm, of the map given by $a(\phi)_i = \int_0^1 \phi_i ( t e_i) dt$, $i=1,2,3$, for $\phi \in [C^\infty(Q)]^3$}. Note that, since $B^j \in [H^1(Q)]^3$, the equation \eqref{cond1} implies 
$\epsilon_0^{-1}\curl{}{B^j}\times n \vert_{-} =0$ on $\partial Q_0$. Furthermore,  the system (\ref{Peqn})--\eqref{cond3} implies  the variational problem of finding $B^j \in [H^1_{\#}(Q)]^3,$ subject to the constraints \eqref{cond1}--\eqref{cond3}, such that the following identity holds:
\begin{equation}
\label{weakB}
\int_{Q_0}\epsilon_0^{-1}{\rm curl}\,B^j\cdot{\rm curl}\,\varphi=\int_{Q}e_j\cdot\varphi+{\omega^2}\int_{Q}B^j\cdot\varphi \hspace{.5cm} \text{ $\forall \varphi\in[H^1_{\#}(Q)]^3$ satisfying (\ref{cond1})--(\ref{cond3}).}
\end{equation}
Indeed, functions $\varphi\in[H^1_{\#}(Q)]^3$ which satisfy (\ref{cond1}),(\ref{cond3}) admit (see Lemma \ref{lem:spaceV} below) the representation $\varphi = \nabla p + \psi$, $p \in H^2_{\#}(Q)$, $\psi \in [H^1_0(Q_0)]^3$. Therefore, it is straightforward to show  \eqref{weakB} holds for $\varphi = \psi$ if and only if \eqref{Peqn} holds. Similarly,  one can show \eqref{weakB} holds for $\varphi = \nabla p$ if and only if \eqref{cond2} holds.

Substituting the representation (\ref{decompose}) 
into \eqref{me1} and using the fact that 
	$$
	\int_Q \big( \nabla_{y'} G*({\rm{div_{y}}}_{}z)(x,y') + z(x, y') \big) dy' = \int_{Q_0}  z(x,y)  dy,
	$$
leads to the operator-pencil spectral problem
\begin{equation}
\label{me7}
\curl{}{\bigl(A^{\text{hom}}\curl{} u(x)\bigr)}  =  \Gamma(\omega) u(x),  \qquad x \in {\mathbb T}, 
\end{equation}
where $\Gamma$  is a {matrix-valued} function that vanishes at $\omega=0,$ and for $\omega\neq0$ has elements
\begin{equation}
\label{gamma}
\Gamma_{ij}(\omega)= \omega^2\left(\delta_{ij} +\omega^2  { \int_Q B_{i}^j }\right),\ \ \  i,j=1,2,3.
\end{equation}
We denote by ${\mathcal H}_1$ the space of vector fields in $[H^1_{\#}(Q)]^3$ that satisfy the conditions (\ref{cond1})--(\ref{cond3}). It can be shown\footnote{Note that $|||\cdot|||:=\bigl(\int_{Q_0}\epsilon_0^{-1}\vert{\rm curl}\cdot\vert^2\bigr)^{1/2}$ is a norm in 
${\mathcal H}_1$ equivalent to the {$[H^1(Q)]^3$}-norm, due to the fact that 
$\bigl(\vert a(\cdot)\vert^2+\Vert{\rm div}\,\cdot\Vert^2_{L^2(Q)}+\Vert{\rm curl}\,\cdot\Vert^2_{[L^2(Q)]^3}\bigr)^{1/2}$ is an equivalent norm in the space $u\in[H^1_{\#}(Q)]^3$. Therefore, 
the equation $\curl{}\ep^{-1}_0\curl{}u=\lambda u,$ $u\in{\mathcal H}_1,$ can be written as $\lambda^{-1}u=Ku$ in the sense of the ``energy" inner product generated by the norm $|||\cdot|||$ and $K$ is a compact self-adjoint operator in $({\mathcal H}_1, |||\cdot|||).$ The claim then follows by a standard Hilbert-Schmidt argument.
}
that there exist countably many pairs $(\alpha_k, r_k) \in \mathbb{R}\times 
{\mathcal H}_1$  such that $\norm{r_k}{[L^2(Q)]^3}=1$ and
\begin{equation*}
\curl{}(\epsilon_0^{-1}{\curl{}{r_k}})= \alpha_k
r_k 
\ \ \ \ { {\rm in}\ Q_0.} 
\end{equation*}
Moreover, the sequence $(r^k)_{k\in\mathbb{N}}$ can be chosen to form an orthonormal basis of the {closure $\overline{\mathcal H}_1$ of ${\mathcal H}_1$ in $[L^2(Q)]^3$ and, upon a} suitable rearrangement, one has
\[
0<\alpha_1\le\alpha_2\le ...\le\alpha_k\le ...\stackrel{k\to\infty}\longrightarrow\infty.
\]
Performing a decomposition\footnote{When applying the standard Fourier representation approach with respect to the basis 
$(r^k)_{k\in{\mathbb N}},$ the vector $e_j$ in the right-hand side of (\ref{Peqn}) is treated as an element of the 
``dual'' of $\overline{\mathcal H}_1,$ the space of linear continuous functionals on ${\mathcal H}_1.$}
of the functions $B^j,$ $j=1,2,3,$ with respect to the above basis yields
$$
B_i^j =\sum_{k=1}^\infty \frac{\int_Q r^k_j}{\alpha_k - \omega^2}r^k_i,\ \ \ \ \ \ \ \ \ \ \omega^2\notin\cup\{\alpha_k\}_{k=1}^\infty,
$$
where $r^k_j,$ $j=1,2,3,$ are the components of the vector $r^k,$ $k\in{\mathbb N}.$  

Consider the functions $\phi^k\in[H_0^1(Q_0)]^3,$ $k\in{\mathbb N},$ that solve the non-local problems
\begin{equation}
\curl{}\bigl(\epsilon_0^{-1}(y){\curl{}{\phi^k}}(y)\bigr)= \alpha_k\biggl(\nabla\int_{Q_0}{ G}(y-y')\,{\rm div}\,\phi^k(y')\,dy'+\phi^k(y)\biggr),\ \ \ \ y\in Q_0,
\label{nonlocal_eq}
\end{equation}
and satisfy the orthonormality conditions
\[
\int_{Q_0}\int_{Q_0}\bigl(\nabla^2{ { G}}(y-y')+I\bigr)\phi_j(y)\cdot\overline{\phi^k(y')}\,dy\,dy'=\delta_{jk},\ \ \ j,k=1, 2,,...,
\]
where $\nabla^2{ {G}}$ is the Hessian matrix of ${{ G}}.$
Using the formula
\[
r^k(y)=\nabla\int_{Q_0}{ G}(y-y')\,{\rm div}\,\phi^k(y')\,dy'+\phi^k(y),\ \ \ \ \ y\in Q, 
\]
we obtain the following representation for $\Gamma:$ 
 \begin{equation}
 \label{me8}
 \Gamma_{ij}(\omega)= \omega^2\delta_{ij} + \omega^4\sum_{k=1}^\infty \frac{\left( \int_{Q_0} \phi^k_i \right) \left( \int_{Q_0} \phi^k_j \right)}{\alpha_k - \omega^2},\ \ \ \ i,j=1,2,3,\ \ \ \ \ \ \ \ \ \omega^2\notin\{0\}\cup\{\alpha_k\}_{k=1}^\infty.
 \end{equation}


\subsection{{Analysis of the limit spectrum}}
Now, we consider the Fourier expansion for the function $u$ in \eqref{me7}:
\[
u(x)=\sum_{m\in{\mathbb Z}^3}\exp(2\pi{\rm i} m\cdot x)\hat{u}(m),\ \ \ \ \ \ \hat{u}(m):=\int_{\mathbb T}\exp(-2\pi{\rm i}m\cdot x)u(x)\,dx,
\]
where the integral is taken component-wise. {As $u$} solves \eqref{me7}, the coefficients $\hat{u}(m)$ 
satisfy the equation
\begin{equation}
\mathcal{M}(m) \hat{u}(m) = \Gamma(\omega) \hat{u}(m),\ \ \ \ m\in{\mathbb Z}^3,
\label{limfour}
\end{equation}
with the {matrix-valued} function ${\mathcal M}$ {is} given by
\begin{equation*}
\mathcal{M}_{lp}(m) = 4\pi^2\varepsilon_{ils}m_s A^{\rm hom}_{ij}\varepsilon_{jpt}m_t=4\pi^2(e_l\times m)\cdot A^{\rm hom}(e_p\times m),\ \ \ m\in{\mathbb Z}^3,\ \  l,p=1,2,3,
\end{equation*}
where $e_j,$ $j=1,2,3$ are the Euclidean basis vectors.  Here $\varepsilon$ is the Levi-Civita symbol:
\[
\varepsilon_{jkl}=\left\{\begin{array}{rc}1,&\ \ (jkl)=(123), (231), (312),\\[0.3em]-1,&\ \ (jkl)=(132), (321), (213),\\[0.3em] 0,&\ \ \ \ {\rm otherwise}.\ \ \ \ \ \ \ \ \ \ \ \ \ \ \ \ \ \ \ \ \ \ \ \end{array}\right.
\]
Notice that, for all $m\in{\mathbb Z}^3\setminus\{0\},$ zero is a simple eigenvalue of $\mathcal{M}(m)$ with eigenvector $m,$ and since 
the matrix $A^{\rm hom}$ is symmetric and positive-definite, the values of ${\mathcal M}$ are also symmetric and positive-definite on vectors $\xi$ such that $\xi \cdot m =0.$ In particular, for all $m \in \mathbb{Z}^3,$  one has
\begin{gather}
\Gamma(\omega) \hat{u}(m) \cdot m = 0
\label{solv_condition}
\end{gather}
whenever $\hat{u}(m)$ is a solution to (\ref{limfour}). 
Denote $\tilde{m}:=\vert m\vert^{-1}m$ and notice that ${\mathcal M}(m)=\vert m\vert^2{\mathcal M}(\tilde{m})$.  
Further, we denote by $\tilde{e}_1(\tilde{m})=\bigl(\tilde{e}_{11}(\tilde{m}), \tilde{e}_{12}(\tilde{m}), \tilde{e}_{13}(\tilde{m})\bigr)$ and $\tilde{e}_2(\tilde{m})=\bigl(\tilde{e}_{21}(\tilde{m}), \tilde{e}_{22}(\tilde{m}), \tilde{e}_{23}(\tilde{m})\bigr)$ the normalised  eigenvectors of 
the matrix $\mathcal{M}(\tilde{m})$ corresponding to its two positive eigenvalues $\lambda_1(\tilde{m})$  and $\lambda_2(\tilde{m})$ {respectively.}

We write $\hat{u}(m)$ in terms of the basis  $\big( \tilde{e}_1(\tilde{m}),\tilde{e}_2(\tilde{m}),\tilde{m}\big),$ as follows:  
\begin{equation*}
\hat{u}(m)=C(\tilde{m})^\top\tilde{u}(\tilde{m}) + \alpha(\tilde{m})\tilde{m},\  \ \ \ \ \tilde{u}(\tilde{m}) \in \RR^2, \alpha(\tilde{m})\in \mathbb{R},\ \  
C(\tilde{m})=  
\left( \begin{matrix}
\tilde{e}_{11}(\tilde{m}) & \tilde{e}_{12}(\tilde{m}) & \tilde{e}_{13}(\tilde{m})\\[0.3em]\tilde{e}_{21}(\tilde{m}) & \tilde{e}_{22}(\tilde{m}) & \tilde{e}_{23}(\tilde{m})
\end{matrix} \right).
\end{equation*}
 { Finding a} non-trivial solution to the problem (\ref{limfour}), \eqref{solv_condition} is equivalent to determining 
 $\bigl(\tilde{u}(\tilde{m}),\alpha(\tilde{m})\bigr) \in \mathbb{R}^3\setminus\{0\}$ such that
\begin{equation}
\label{lim:fourfinal}
\begin{aligned}
&\vert m \vert^2 \Lambda(\tilde{m}) \tilde{u}(\tilde{m})  =  C(\tilde{m}) \Gamma(\omega)C(\tilde{m})^\top\tilde{u}(\tilde{m}) +
\alpha(\tilde{m})C(\tilde{m})  \Gamma(\omega) \tilde{m},\ \ \ \  
\\[2.6pt]
& \Gamma(\omega) C(\tilde{m})^\top\tilde{u}(\tilde{m})\cdot \tilde{m}  = -  \alpha(\tilde{m})\Gamma(\omega) \tilde m \cdot \tilde m,
\end{aligned}
\end{equation}
where 
\begin{equation*}
\Lambda(\tilde{m}):= \left( \begin{matrix}
\lambda_1(\tilde{m})  & 0 \\[0.3em] 0 & \lambda_2(\tilde{m})
\end{matrix} \right).
\end{equation*}
We have thus proved the following statement.
\begin{prop}
\label{prop:specrep}
	The spectrum of the problem \eqref{me1}--\eqref{me3} is the union of the following sets.
	\begin{enumerate}
		\item { The elements of $\{ \alpha_k : k \in \mathbb{Z} \}$  such that at least one of the  corresponding $r^k$ has zero mean over $Q.$ These are eigenvalues of infinite multiplicity and the corresponding eigenfunctions $H^0(x,y)$ are of the form $w(x) r^k(y)$ for an arbitrary $w \in L^2(\mathbb{T})$. } 
		\item The set $\bigl\{\omega^2:  \exists m\in{\mathbb Z}^3\ {\rm such\ that}\ (\ref{lim:fourfinal}){\rm\ holds}\bigr\},$ 
		with the corresponding eigenfunctions {$H^0(x,y)$} of \eqref{me1}--\eqref{me3}
		having the form {$u(x) + \nabla_y v(x,y) +z(x,y),$} where {$u(x) = \exp(2\pi{\rm i} m\cdot x)\hat{u}(m)$} is an eigenfunction of 
		macroscopic problem (\ref{me7}) 
		and 
{		\begin{equation*}
		\nabla_y v(x,y)+z(x, y)=\omega^2 B(y)u(x)\ \ {\rm a.e.}\ (x,y)\in{\mathbb T}\times Q,
		\end{equation*}
that is	 $ H^0(x,y) =\big( I + \omega^2B(y) \big)\exp(2\pi{\rm i} m\cdot x)\hat{u}(m)$.}
	\end{enumerate}
\end{prop}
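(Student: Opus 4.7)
The plan is to split the analysis of non-trivial triples $(u,v,z)$ solving \eqref{me1}--\eqref{me3} according to whether the macroscopic field $u$ vanishes identically on $\mathbb{T}$. These two regimes will correspond respectively to the two sets listed in the proposition, and checking equivalence in each will establish both inclusions.

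For the case $u \equiv 0$ with $\omega \neq 0$, equation \eqref{me1} immediately forces $\int_{Q_0} z(x,y)\,dy = 0$ for a.e.\ $x \in {\mathbb T}$, while \eqref{me2}--\eqref{me3} reduce to an eigenvalue problem for $R(x,\cdot) := \nabla_y v(x,\cdot) + z(x,\cdot)$. Using the decomposition $r^k(y) = \nabla V^k(y) + \phi^k(y)$ underlying \eqref{nonlocal_eq}, where $V^k(y) := \int_{Q_0} G(y-y')\,{\rm div}\,\phi^k(y')\,dy'$ is $Q$-periodic and $\phi^k \in [H^1_0(Q_0)]^3$, I would verify directly that the ansatz $v(x,y) = w(x)V^k(y)$, $z(x,y) = w(x)\phi^k(y)$ satisfies \eqref{me2}--\eqref{me3} precisely when $\omega^2 = \alpha_k$. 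The identity $\int_{Q_0} \phi^k = \int_Q r^k$, a consequence of the $Q$-periodicity of $V^k$ and the fact that $\phi^k$ is supported in $Q_0$, then identifies the compatibility $\int_{Q_0} z = 0$ with the zero-mean condition on $r^k$. Since $w \in L^2({\mathbb T})$ is arbitrary, this produces the infinite-multiplicity eigenspaces appearing in item 1.

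For $u \not\equiv 0$ I would invoke the reduction already carried out in this section: away from the poles of $\Gamma$, the representation \eqref{decompose} is well-defined via unique solvability of the cell problem \eqref{Peqn}--\eqref{cond3}, equation \eqref{me1} collapses to the operator-pencil equation \eqref{me7}, and Fourier expansion in $x$ together with the scalar orthogonality \eqref{solv_condition} reduces the problem mode-by-mode to the system \eqref{lim:fourfinal} written in the basis $\bigl(\tilde{e}_1(\tilde{m}),\tilde{e}_2(\tilde{m}),\tilde{m}\bigr)$. Conversely, any non-trivial solution of \eqref{lim:fourfinal} for some $m \in {\mathbb Z}^3$ reconstructs a single-mode $u(x) = \exp(2\pi{\rm i}m\cdot x)\hat{u}(m) \neq 0$ which, together with $\nabla_y v + z = \omega^2 B u$, produces a valid triple of the form asserted in item 2.

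The step I expect to be the main obstacle is the careful treatment of resonant values $\omega^2 = \alpha_k$ where $\int_Q r^k \neq 0$: at such values the Mittag-Leffler expansion \eqref{me8} exhibits a genuine pole of $\Gamma$, and the cell problem \eqref{Peqn}--\eqref{cond3} for $B$ fails to be uniquely solvable, so one must verify via a Fredholm alternative in ${\mathcal H}_1$ that no eigenvalue slips through the above dichotomy, and in particular that any $u \not\equiv 0$ eigenfunction persists as a non-trivial solution of \eqref{lim:fourfinal} in the appropriate limiting sense. Once that bookkeeping is complete, the two cases exhaust the spectrum and the stated explicit forms of the eigenfunctions follow.
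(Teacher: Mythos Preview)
Your approach is correct and essentially mirrors the paper's: the proposition is stated there as an immediate consequence of the preceding derivation (``We have thus proved the following statement''), which is precisely the reduction via the Green-function representation \eqref{decompose}, the pencil equation \eqref{me7}, and the Fourier/basis computation leading to \eqref{lim:fourfinal}, with item 1 arising from the $r^k$-eigenbasis of ${\mathcal H}_1$. Your explicit dichotomy $u\equiv 0$ versus $u\not\equiv 0$ and your remark on the resonant values $\omega^2=\alpha_k$ with $\int_Q r^k\neq 0$ make transparent a case distinction that the paper leaves implicit, but the underlying argument is the same.
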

{ An immediate consequence of the above analysis is the following result.}
\begin{corollary}
If the matrix $\Gamma(\omega)$ is negative-definite, the value $\lambda = \omega^2$ does not belongs to the spectrum of  \eqref{me1}--\eqref{me3}.
\end{corollary}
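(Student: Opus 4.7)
By Proposition \ref{prop:specrep} the spectrum of \eqref{me1}--\eqref{me3} is the union of a set of Dirichlet-type eigenvalues $\alpha_k$ and of those $\omega^2$ admitting a non-trivial solution $\bigl(\tilde u(\tilde m),\alpha(\tilde m)\bigr)$ of \eqref{lim:fourfinal} for some $m\in\mathbb{Z}^3$. At any $\alpha_k$ for which a corresponding $r^k$ has non-zero mean, the series \eqref{me8} defining $\Gamma(\omega)$ blows up, so the hypothesis that $\Gamma(\omega)$ be finite and negative-definite already rules out that portion of the first set; the substance of the argument is therefore to exclude membership of $\omega^2$ in the second set.

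For $m=0$, the problem \eqref{limfour} reduces to $\Gamma(\omega)\hat u(0)=0$, and invertibility of a negative-definite matrix forces $\hat u(0)=0$. For $m\neq 0$, I would take the Euclidean inner product of the first equation of \eqref{lim:fourfinal} with $\tilde u(\tilde m)$ and use the symmetry of $\Gamma(\omega)$ to obtain
\[
|m|^2\,\Lambda(\tilde m)\tilde u\cdot \tilde u = \Gamma(\omega)C(\tilde m)^\top\tilde u\cdot C(\tilde m)^\top\tilde u+\alpha\,\Gamma(\omega)\tilde m\cdot C(\tilde m)^\top\tilde u.
\]
The second equation of \eqref{lim:fourfinal}, again combined with the symmetry of $\Gamma(\omega)$, converts the cross term into $-\alpha^2\,\Gamma(\omega)\tilde m\cdot\tilde m$, and a completion of squares yields the compact identity
\[
|m|^2\,\Lambda(\tilde m)\tilde u\cdot \tilde u = \Gamma(\omega)\hat u(m)\cdot\hat u(m).
\]

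The left-hand side is non-negative because $\lambda_1(\tilde m),\lambda_2(\tilde m)>0$, while the right-hand side is non-positive by hypothesis. Both sides must therefore vanish, from which strict positivity of $\Lambda(\tilde m)$ gives $\tilde u=0$ and strict negative-definiteness of $\Gamma(\omega)$ gives $\hat u(m)=0$; consequently $\alpha=0$, and no non-trivial solution of \eqref{lim:fourfinal} exists. The only delicate step is to carry the symmetry of $\Gamma(\omega)$ through the pairings so that the mixed term collapses to an $\alpha^2$ coefficient; once this is achieved, the conclusion follows from the basic incompatibility of a positive-semidefinite and a negative-semidefinite quadratic form agreeing on a common vector.
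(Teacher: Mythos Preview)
Your argument for the second set is correct and arrives at the key identity $|m|^2\,\Lambda(\tilde m)\tilde u\cdot\tilde u=\Gamma(\omega)\hat u(m)\cdot\hat u(m)$, from which the sign clash forces $\hat u(m)=0$. The paper's proof reaches the same endpoint by a slightly different packaging: it extends the $2\times2$ block $\Lambda(\tilde m)$ to the full $3\times3$ diagonal matrix $\Lambda'(\tilde m)={\rm diag}(\lambda_1,\lambda_2,0)$ and observes that \eqref{limfour} forces $\det\bigl(|m|^2\Lambda'(\tilde m)-C'(\tilde m)\Gamma(\omega)C'(\tilde m)^\top\bigr)=0$, which is impossible because the matrix inside is the sum of a positive-semidefinite and a positive-definite matrix. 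Both arguments rest on the same observation that $\mathcal{M}(m)-\Gamma(\omega)$ is positive-definite; the paper phrases this through non-vanishing of a determinant, while you phrase it through the associated quadratic form. Your route is slightly more hands-on; note, though, that you could have bypassed the reduced system \eqref{lim:fourfinal} entirely by pairing \eqref{limfour} with $\hat u(m)$ directly to get $\mathcal{M}(m)\hat u(m)\cdot\hat u(m)=\Gamma(\omega)\hat u(m)\cdot\hat u(m)$, which yields the same contradiction in one line.

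One minor point: your treatment of the first set in Proposition~\ref{prop:specrep} is slightly garbled. That set consists of the $\alpha_k$ whose eigenfunctions have \emph{zero} mean, so your remark about the series \eqref{me8} blowing up does not apply to them term by term. The cleaner justification is simply that \eqref{me8} defines $\Gamma(\omega)$ only for $\omega^2\notin\{\alpha_k\}_{k\ge1}$, so the hypothesis that $\Gamma(\omega)$ is negative-definite already presupposes $\omega^2$ lies outside the entire first set. The paper's proof is equally silent on this point and relies on the same implicit convention.
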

\begin{proof}
Since $\mathcal{M}$ admits the spectral decomposition $C'(\tilde{m})\Lambda'(\tilde{m})C'(\tilde{m})^\top,$ where
\begin{equation*}
C'(\tilde{m}):=  
\left( \begin{matrix}
\tilde{e}_{11}(\tilde{m}) & \tilde{e}_{12}(\tilde{m}) & \tilde{e}_{13}(\tilde{m})\\[0.3em]\tilde{e}_{21}(\tilde{m}) & \tilde{e}_{22}(\tilde{m}) & \tilde{e}_{23}(\tilde{m}) \\[0.3em]
\tilde{m}_{1} & \tilde{m}_{2} & \tilde{m}_{3} 
\end{matrix} \right), \qquad 
\Lambda'(\tilde{m}):= \left( \begin{matrix}
\lambda_1(\tilde{m})  & 0 & 0\\[0.3em] 0 & \lambda_2(\tilde{m}) & 0 \\[0.3em] 0 & 0 & 0
\end{matrix} \right),
\end{equation*}
 a necessary condition for pairs $(m,\omega)$ such that \eqref{limfour} has a solution is 
as follows:
\begin{equation*}
{\rm det}\big(\vert m\vert^2 \Lambda'(\tilde{m})- C'(\tilde{m}) \Gamma(\omega)C'(\tilde{m})\big)=0.
\label{det_condition_1}
\end{equation*}
This is not possible since $\Lambda'(\tilde{m})$  {is positive-semidefinite and, by assumption, the matrix $\Gamma(\omega)$ and, consequently, 
the matrix $C'(\tilde{m})\Gamma(\omega)C'(\tilde{m})$ are negative-definite}.
\end{proof}

\subsection{Examples of different {admissible} wave propagation regimes {for the effective  spectral problem}}

In this section we explore the effective wave propagation properties of high-contrast electromagnetic media.  
We demonstrate that the sign-indefinite nature of the matrix-valued function $\Gamma$ gives rise to phenomena not present in the case of polarised waves. 

{  Suppose that the inclusion is symmetric under a rotation by $\pi$ around at least two of the three coordinate axes, then the matrices $A^{\rm hom}$ and $\Gamma(\omega)$ are diagonal (see Appendix): 
$A^{\rm hom}={\rm diag}(a_1,a_2,a_3),$ $\Gamma(\omega)={\rm diag}\bigl(\beta_1(\omega),\beta_2(\omega),\beta_3(\omega)\bigr)$. Here $a_i$ are positive constants and $\beta_i$ are real-valued scalar functions.
Notice that, since $| \tilde{m} | = 1$, the eigenvalues $\lambda_{1,2}(\tilde{m})$ of ${\mathcal M}(\tilde{m})$ are the solutions to the quadratic equation
\begin{equation}
\label{characteristic}
\lambda^2-\lambda\bigl\{(a_2 + a_3)\tilde{m}_1^2+ (a_1 + a_3)\tilde{m}_2^2+(a_1 + a_2)\tilde{m}_3^2)\bigr\}
+\bigl(a_1a_2\tilde{m}_3^2+a_2a_3\tilde{m}_1^2+a_1a_3\tilde{m}_2^2\bigr) =0.
\end{equation}
We will now solve the eigenvalue problem \eqref{limfour}, equivalently (\ref{lim:fourfinal}), for particular examples of such inclusions.}

\subsubsection{Isotropic propagation (no ``weak'' band gaps)} 
\label{no_weak_gaps}
If the inclusion $Q_0$ is symmetric by a $\pi/2$ rotation around at least two of the three axes, say $x_1$ and 
$x_2,$ then $a = a_1 = a_2=a_3$ and $ \beta(\omega) = \beta_1(\omega) = \beta_2(\omega)=\beta_3(\omega)$. The equation \eqref{characteristic} takes the form 
$(\lambda - a)^2 = 0,$
and therefore $\lambda_1(\tilde{m}) = \lambda_2( \tilde{m}) = a$ is an eigenvalue of multiplicity two of ${\mathcal M}(\tilde{m}),$  with orthonormal eigenvectors
given by
\begin{equation}
\tilde{e}_1(\tilde{m})=e_2,\ \ \ \tilde{e}_2(\tilde{m})=e_3\ \ \ \ \ \ \ {\rm if}\ \ |\tilde{m}_1|=1,
\label{trivial}
\end{equation} 
and 
\begin{equation}
\tilde{e}_1( \tilde{m}) = 
\frac{1}{\sqrt{1-\tilde{m}_1^2}}
{e}_1 \times \tilde{m},\ \ \ \ \ \ \ 
\tilde{e}_2(\tilde{m})= 
\frac{1}{\sqrt{1-\tilde{m}_1^2}}( {e}_1 \times \tilde{m}  ) \times \tilde{m}\ \ \ \ \ \ \ \ {\rm if}\ \ |\tilde{m}_1|<1.
\label{eigenvector_formula}
\end{equation}
As before, $e_j,$ $j=1,2,3,$ are the Euclidean basis vectors. The system \eqref{lim:fourfinal} takes the form
$$
 a \vert m \vert^2 \tilde{u}(\tilde{m})=  \beta(\omega)  \tilde{u}(\tilde{m}) , \qquad \qquad \qquad  \alpha(\tilde{m})\beta(\omega) = 0.
$$
Notice that if $\omega$ is a zero of $\beta$ then necessarily $\tilde{u}(\tilde{m})$ is the zero vector. 
For such values of $\omega,$ the above system is satisfied for any $\alpha(\tilde{m}),$ {\it i.e.} the non-trivial eigenvectors to \eqref{limfour} are parallel to $\tilde{m}$. On the other hand, if $\beta(\omega) \neq 0$, then $\alpha(\tilde{m})=0$ and 
$\omega$ is an eigenvalue of \eqref{limfour} if and only if it solves the equation $\beta(\omega) =  a\vert m \vert^2.$
In this case $\tilde{u}(\tilde{m})$ is an arbitrary element of ${\mathbb R}^2$ and $\hat{u}(m)= C(\tilde{m})^\top\tilde{u}(\tilde{m})$ is an arbitrary vector of the (2-dimensional) eigenspace spanned by the vectors $\tilde e_1(\tilde m)$ and $\tilde e_2(\tilde m).$ Finally, there are no non-trivial solutions $\hat{u}$ when $\beta(\omega)<0.$

\subsubsection{Directional propagation (existence of ``weak'' band gaps)}
If the inclusion $Q_0$ is symmetric by a $\pi/2$ rotation around one of the three coordinate axis, say $x_1,$ and by a $\pi$ rotation around another axis, say $x_2,$ one has $a = a_1$, $b = a_2=a_3$ and $\beta_2(\omega)=\beta_3(\omega)$. {  Here, recalling $| \tilde{m} | =1$, \eqref{characteristic} takes the form 

$$
(\lambda - b )(\lambda  -  a(1 - \tilde{m}_1^2)- b\tilde{m}_1^2) = 0,
$$
whence $\lambda_1(\tilde{m}) = a(1-\tilde{m}^2_1)  + b\tilde{m}^2_1 $, $\lambda_2(\tilde{m}) = b$.} There are now two separate cases to consider.

{\it Case 1).} Assume that  { $ |\tilde{m}_1 | = 1$}, {\it i.e.} the vector $\tilde{m}$ is parallel to the axis of higher symmetry.  Here, $\mathcal{M}(\tilde{m})= {\rm diag}(0,b,b)$ and $b$ is an eigenvalue of multiplicity two with the eigenspace spanned by the vectors (\ref{trivial}). 
The system \eqref{lim:fourfinal} takes the form
$$
 bm_1^2\tilde{u}(\tilde{m})=\beta_2(\omega)\tilde{u}(\tilde{m}), \quad\quad \alpha(\tilde{m})\beta_1(\omega) = 0.
$$
Here, if $\beta_2(\omega)<0,$ then necessarily $\tilde{u}(\tilde{m})=0$ and non-trivial solutions $\hat{u}(m)=\alpha(\tilde{m})m$ exist if and only if $\beta_1(\omega)=0.$
On the other hand, if $\beta_1(\omega)<0,$ then necessarily $\alpha(\tilde{m})=0$ and non-trivial solutions 
$\hat{u}(m)=C(\tilde{m})^\top\tilde{u}(\tilde{m})$ exist if and only if $\beta_2(\omega)>0.$ The first situation only occurs at a discrete set 
of values $\omega,$ while {, unlike in the isotropic case,} the second situation can give rise to intervals of admissible $\omega$ with a reduced number of propagating modes, which we refer to as ``weak band gaps''.


{\it Case 2).}  {  Assume $|\tilde{m}_1|<1$,} {\it i.e.} the vector $\tilde{m}$ is not parallel to the axis of higher symmetry.   { Recall that the eigenvectors corresponding to $\lambda_1(\tilde{m}),$ $\lambda_2(\tilde{m})$ are given by $\tilde{e}_1(\tilde{m}),$ 
$\tilde{e}_2(\tilde{m})$ in (\ref{eigenvector_formula}).  By setting $\Gamma(\omega) = {\rm diag}(\beta_1(\omega) - \beta_2(\omega), 0,0) + \beta_2(\omega) I$ it is easy to see that the} system  \eqref{lim:fourfinal} takes the form
\begin{equation}
\begin{aligned}
 | m |^2 \lambda_1(\tilde{m}) \tilde{u}_1(\tilde{m})& = \beta_2(\omega) \tilde{u}_1(\tilde{m}), \\[0.3em]
\bigl( \beta_1(\omega)- \beta_2(\omega)\bigr)m_1 \sqrt{1-m^2_1} \tilde{u}_2(\tilde{m})& = \alpha \left( (\beta_1(\omega) -\beta_2(\omega)) m_1^2 + \beta_2(\omega)\right), \\[1pt]
| m |^2 \lambda_2(\tilde{m}) \tilde{u}_2(\tilde{m}) =  \big(  \beta_2(\omega) +  \bigl(\beta_1(\omega) - & \beta_2(\omega))(1 - m^2_1\bigr) \big)  \tilde{u}_2(\tilde{m}) - \alpha {  (\beta_1(\omega) - \beta_2(\omega))}m_1 \sqrt{1 - m_1^2},
\end{aligned} 
\label{lambda_system}
\end{equation}
If $\tilde{m}_1 = 0$, {\it i.e.} the vector $\tilde{m}$ is perpendicular to the direction of higher symmetry, then the system (\ref{lambda_system}) fully decouples and reduces to
$$
 \vert m \vert^2 a \tilde{u}_1(\tilde{m}) = \beta_2(\omega) \tilde{u}_1(\tilde{m}) ,\hspace{1.4cm}
 \vert m\vert^2 b \tilde{u}_2(\tilde{m}) = \beta_1(\omega)\tilde{u}_2(\tilde{m}) , \hspace{1.4cm} \alpha \beta_2(\omega) = 0.
$$
Suppose,  $\beta_1(\omega)$ ({\it resp.} $\beta_2(\omega)$) is negative for some $\omega$, then the above system implies that $\tilde{u}_2(\tilde{m}) = 0$ ({\it resp.} $\tilde{u}_1(\tilde{m})= 0$). In this case, we see that propagation is restricted solely to the direction of $\tilde{e}_1(\tilde{m})$ ({\it resp.} $\tilde{e}_2(\tilde{m})$) which is orthogonal to the eigenvector(s) corresponding to the negative eigenvalue 
of $\Gamma(\omega)$. In both situations weak band gaps are present.



\begin{remark}
	Recently, there has been several works on the analysis of problems with ``partial'' or ``directional" wave propagation in the context of elasticity, where at some frequencies, propagation occurs for some but not for all values of the wave vector: the analysis of the vector problems for thin structures of critical thickness \cite{ZhPast2002}, the analysis of  high-contrast \cite{ZhPast2013}, and partially high-contrast \cite{SmyshlyaevPartial} periodic elastic composites. To our knowledge, the effect we describe here is the first example of a similar kind for Maxwell equations.  
\end{remark}

\begin{remark}
	When the ``size'' $T$ of the domain ${\mathbb T}$ increases to infinity, the spectrum of \eqref{me1}--\eqref{me3} converges to a union of intervals (``bands'') separated by intervals
	of those values $\omega^2$ for which the matrix $\Gamma(\omega)$ is negative-definite (``gaps'', or ``lacunae'').
	As above, we say that $\omega^2$ belongs to a weak band gap (in the spectrum of \eqref{me1}--\eqref{me3}) if at least one eigenvalue of $\Gamma(\omega)$ is   {positive-semidefinite} and at least one eigenvalue of $\Gamma(\omega)$ is negative.  
\end{remark}

\section{Two-scale asymptotic expansion   { of the eigenfunctions}}
\label{expansionsection}

Here we give the details of the recurrent procedure for the construction of the series (\ref{e1.2}). As before

Substituting the expansion \eqref{e1.2} into \eqref{e1.1} and equating coefficients in front of  $\eta^{-2}, \eta^{-1},$  and $\eta^0,$ we arrive the following sets of equations, where $x\in{\mathbb T}$ is a parameter:
\begin{eqnarray}
\curl{y}\bigl(\epsilon_1^{-1}(y){\curl{y}{H^0(x,y)}}\bigr)= 0, & y \in Q_1, \label{e1.3} \\[0.4em]
\epsilon_1^{-1}(y)\curl{y}{H^0}(x,y) \times n(y) \big\vert_{+} = 0, & y \in \partial{Q_0}, \label{e1.4}
\end{eqnarray}
\begin{eqnarray}
\curl{y}\bigl(\epsilon_1^{-1}(y){\curl{y} H^1(x,y)}\bigr) =   {  - \bigl( \curl{y}\epsilon_1^{-1}(y){\curl{x} + \curl{x}\epsilon_1^{-1}(y)\curl{y}\bigr) }H^0(x,y)}, \quad  y \in Q_1, \label{e2.1} \\[0.4em]
\left(\epsilon_1^{-1}(y)\curl{y}{H^1}(x,y)\times n(y)  + \epsilon_1^{-1}(y)\curl{x}{H^0}(x,y)\times n(y)\right) \big\vert_{+}= 0, \ \ \ \quad\qquad y \in \partial{Q_0}, \label{e2.2}
\end{eqnarray}
\begin{eqnarray}
\begin{split}
\curl{y}\bigl(\epsilon_1^{-1}(y){\curl{y} H^2(x,y)}\bigr) = & - \left( \curl{y}\epsilon_1^{-1}(y){\curl{x}{}} + \curl{x}\epsilon_1^{-1}(y){\curl{y}{}} \right)H^1(x,y) \\[0.4em]  &- \curl{x}\epsilon_1^{-1}(y){\curl{x} H^0(x,y)} + \omega^2 H^0(x,y), \quad  y \in Q_1, 
\end{split} 
\label{e3.1}\\[0.4em]
\begin{split}
\left(\epsilon_1^{-1}(y)\curl{y}{H^2(x,y)}\right.+&\left.\epsilon_1^{-1}(y)\curl{x}{H^1}(x,y)\right)\times n(y) \big\vert_{+}
\\[0.4em]
&
= \epsilon_0^{-1}(y)\curl{y}{H^0}(x,y) \times n(y) \big\vert_{-}, \ \quad\quad y \in \partial{Q_0},
\end{split}
\label{e3.2}
\end{eqnarray}
and
\begin{eqnarray}
\curl{y}\bigl(\epsilon_0^{-1}(y){\curl{y} H^0(x,y)}\bigr)=  \omega^2 H^0(x,y), \quad  y \in Q_0, \label{e4.1}\\[0.4em]
H^0(x,y)\bigr\vert_-=H^0(x,y)\bigr\vert_+, \quad y\in\partial Q_0.\label{e4.2}
\end{eqnarray}
  { Multiplying the equation \eqref{e1.3} by $H^0$, integrating by parts over $Q_1$, and using \eqref{e1.4} }shows that ${\rm curl}_yH^0(x,y)=0,$ $y\in Q_1.$ More precisely, for all $x\in{\mathbb T}$ we seek $H^0(x,\cdot)$ from the space ({\it cf.} (\ref{spaceV}))
\begin{equation*}
V : = \left\{ v \in [H^1_{\#}(Q)]^3\,\big\vert\,\curl{}{v} = 0 \text{ in $Q_1$} \right\}.
\end{equation*}
Before proceeding, we recall a characterisation of the space $V$ (see \cite{maxCh}) that proves useful in the analysis of the term $H^0$.
\begin{lem}[Characterisation of $V$] 
\label{lem:spaceV}
A function $v\in[H^1_{\#}(Q)]^3$ is an element of the space $V$ if and only if
$$
v(y)= a + \nabla b(y) + c(y)\ \ \ \ \ {\rm a.e.}\ y\in Q,
$$
	for some $a \in \mathbb{R}^3$, $  {b \in H^2_{\#}(Q)}$, $c \in [H^1_0(Q_0)]^3$.
\end{lem}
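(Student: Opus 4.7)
\vspace{2mm}\noindent\textit{Proof sketch.} The sufficiency direction is immediate: constants and gradients of $H^2_\#(Q)$ functions are curl-free on all of $Q,$ while an element of $[H^1_0(Q_0)]^3$ extended by zero to $Q_1$ vanishes there identically, so any $v$ of the stated form satisfies $\mathrm{curl}\,v = 0$ on $Q_1.$ For the converse, the plan is to produce a scalar potential for $v$ on a periodically extended matrix, strip off the resulting linear monodromy as a constant $a,$ and absorb the remainder into an $[H^1_0(Q_0)]^3$ piece.

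Concretely, given $v \in V,$ I would extend $v$ periodically to $\mathbb{R}^3$ and consider the extended matrix $\widetilde{Q}_1 := \mathbb{R}^3 \setminus \bigcup_{m\in\mathbb{Z}^3}(\overline{Q_0}+m),$ on which the extended field is curl-free. Using the hypothesis that the single-cell matrix $Q_1$ is simply connected, together with the fact that the shared faces between adjacent periodic cells lie entirely in $Q_1$ and are themselves simply connected as squares, a Seifert--van Kampen argument over the cover $\{Q_1 + m : m\in\mathbb{Z}^3\}$ shows that $\widetilde{Q}_1$ is simply connected as well. Hence $v = \nabla\Phi$ on $\widetilde{Q}_1$ for some $\Phi \in H^2_{\mathrm{loc}}(\widetilde{Q}_1).$ Periodicity of $v$ forces the differences $\Phi(y+m) - \Phi(y)$ to be constant in $y,$ and additivity in $m$ identifies that constant as $a\cdot m$ for a unique $a \in \mathbb{R}^3.$ Setting $\Psi(y) := \Phi(y) - a\cdot y$ then yields a $Q$-periodic function with $\nabla\Psi = v - a$ on $Q_1$ and $\Psi \in H^2(Q_1).$ I would next apply a standard $H^2$-extension across $\partial Q_0,$ available by the smoothness assumption on $\partial Q_0,$ to obtain $b \in H^2_\#(Q)$ with $b|_{Q_1} = \Psi$ (periodicity is preserved because the extension is localised to the interior hole $Q_0$ and does not reach $\partial Q$), and define $c := v - a - \nabla b.$ By construction $c \equiv 0$ on $Q_1,$ hence $c$ is supported in $\overline{Q_0}$ with vanishing trace on $\partial Q_0,$ giving $c \in [H^1_0(Q_0)]^3.$

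The principal obstacle is the topological step: verifying simple connectivity of $\widetilde{Q}_1.$ This is what promotes curl-freeness from a local property to the global existence of a single-valued potential $\Phi,$ and, crucially, is what ensures the monodromy reduces to a single additive constant $a$ rather than a higher-dimensional cohomology class; it is also precisely where the standing hypothesis that one copy of $Q_1$ is simply connected is used. A subsidiary technical point is the availability of a periodic $H^2$-extension across $\partial Q_0,$ which is where the ``sufficiently smooth'' regularity of $\partial Q_0$ enters.
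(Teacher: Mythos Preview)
The paper does not actually prove this lemma; it is stated with a citation to \cite{maxCh} and then used. So there is no in-paper argument to compare against, and your sketch stands on its own as a self-contained proof.

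Your argument is sound. The sufficiency direction is indeed trivial, and for necessity the strategy---pass to the periodic extension $\widetilde{Q}_1$, use simple connectivity to produce a global potential $\Phi$, subtract off the linear monodromy $a\cdot y$ to obtain a genuinely periodic $\Psi\in H^2(Q_1)$, extend $\Psi$ across $\partial Q_0$ to $b\in H^2_\#(Q)$, and set $c:=v-a-\nabla b$---is the natural one and works. Two minor points deserve a sentence each if you write this out in full. First, your Seifert--van Kampen step tacitly uses that $\overline{Q_0}$ does not meet $\partial Q$, so that the shared faces are full unit squares lying in $Q_1$; this is implicit in the paper's setup (``$Q$ contains an inclusion $Q_0$'' together with $Q_1$ being a simply connected Lipschitz set) but is worth making explicit, and the cover $\{Q_1+m\}$ must be thickened slightly to obtain open sets with simply connected pairwise intersections. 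Second, the conclusion $c\in[H^1_0(Q_0)]^3$ follows because $c\in[H^1(Q)]^3$ vanishes identically on the open set $Q_1$, hence its trace on $\partial Q_0$ from the $Q_0$ side agrees with the (zero) trace from the $Q_1$ side; this is a standard $H^1$-trace continuity fact but should be stated rather than left implicit.
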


Taking into account, via Lemma \ref{lem:spaceV}, that the leading-order term $H^0$ is of the form 
\begin{equation}
\label{H0}
H^0(x,y) = u(x) + \nabla_y v(x,y) + z(x,y)
\end{equation}
and substituting \eqref{H0} into the equations \eqref{e2.1}--\eqref{e2.2}, we find that the coefficient  $H^1$ has the 
representation
$H^1(x,y) = N(y)\,{\rm curl}\,u(x) + \widetilde{H}^1(x,y),$ up to the addition of an element of $V$. Here the term $\widetilde{H}^1(x,y)$ satisfies 
\begin{eqnarray}
\curl{y}\Bigl(\epsilon_1^{-1}(y){\big( \curl{y}\widetilde{H}^1(x,y) + \curl{x}{\nabla_y v(x,y)}\big)}\Bigr) =  0, &  y \in Q_1, \label{e2.5} \\
\epsilon_1^{-1}(y)\big( \curl{y}{\widetilde{H}^1}(x,y) + \curl{x}{\nabla_y v(x,y)}  \big) \times n(y) \big\vert_{+}  = 0, & y \in \partial{Q_0}, \label{e2.6}
\end{eqnarray}
and $N=N(y)$ is a $Q$-periodic matrix-valued function whose columns $N^r=N^{r}(y)$, $r = 1, 2, 3,$ are solutions 
to the problems
\begin{equation}
\curl{}\Bigl(\epsilon_1^{-1}(y){\bigl(\curl{}  N^r(y) + e^r\big)}\Bigr)=  0,\ \   y \in Q_1,\ \ \ \ \ \ 
\epsilon_1^{-1}(y)\big(\curl{}{ N^r(y)} + e^r \big) \times n(y)  = 0, \ \  y \in \partial{Q_0},  \label{e2.4}
\end{equation}
where $e^r$ is the $r$th Euclidean basis vector. It is shown (\cite{Cooper}, \cite{KamSmysh2013}) that (\ref{e2.4}) admits a unique solution in $V^\perp,$ the orthogonal complement to $V$ in the 
space $[H^1_{\#}(Q)]^3.$ 

Looking for $H^1(x,\cdot)\in[H^1_{\#}(Q)]^3$ and taking into account the identity $\curl{x}{\nabla_y} = - \curl{y}{\nabla_x}$ together with \eqref{e2.5}--\eqref{e2.6}, we infer that for all $x\in{\mathbb T}$ the function $h(x,\cdot) : = \widetilde{H}^1(x,\cdot) - \nabla_x v(x,\cdot)$ is a solution in $[H^1_{\#}(Q)]^3$ 
to
\begin{eqnarray*}
	\curl{y}\bigl(\epsilon_1^{-1}(y){\curl{y}{h(x,y)}} \bigr)=  0, &  y \in Q_1,\ \ \ \ \ \ \ \ \epsilon_1^{-1}(y)\curl{y}{h(x,y)}  \times n(y) \big\vert_{+}  = 0, & y \in \partial{Q_0}. 
\end{eqnarray*}
In particular, the function $h$ belongs to the space $V$. Therefore,  one has
\begin{equation}
\label{H1}
H^1(x,y) = N(y)\,\curl{x}{u}(x) + \nabla_x v(x,y),
\end{equation} up to the addition of an element of $V$. (As we discuss in Remark \ref{last_remark} below, one can specify 
the divergence ${\rm div}_yH^1(x,y).$ This, along with the condition that the $y$-average of $H^1$ vanishes, defines this additional element of $V$ in a unique way.)

Further, multiplying the equation \eqref{e3.1} by an arbitrary test function $\phi\in V$ and integrating over $Q_1$ yields
\begin{gather}
\int_{Q_1}\curl{y}\bigl(\epsilon_1^{-1}(y){
\curl{y}{H^2(x,y)}}\bigr)\cdot\phi(y) \,\mathrm{d}y 
= \int_{Q_1} \omega^2H^0(x,y)\cdot\phi(y)\,\mathrm{d}y - \int_{Q_1} \curl{y}\bigl(\epsilon_1^{-1}(y){\curl{x}{H^1(x,y)}}\bigr)\cdot\phi(y)\,\mathrm{d}y \nonumber \\ 
- \left( \int_{Q_1} \curl{x}\bigl(\epsilon_1^{-1}(y){\curl{x}{H^0(x,y)}}\bigr)\cdot\phi(y) + \curl{x}\bigl(\epsilon_1^{-1}(y){\curl{y}{H^1(x,y)}}\bigr)\cdot\phi(y)\,\mathrm{d}y \right). \label{e5.1}
\end{gather}
We integrate by parts in the left-hand side of \eqref{e5.1}   { to determine that}
\begin{flalign}
\label{e5.2}
\int_{Q_1} \curl{y}\bigl(\epsilon_1^{-1}(y){\curl{y}{H^2(x,y)}}\bigr)\cdot\phi(y)\,\mathrm{d}y &=
\int_{\partial{Q_0}}\epsilon_1^{-1}(y)\bigl(\curl{y}{H^2(x,y)} \times n(y) \big\vert_{+}\bigr)\cdot\hspace{1pt} \phi(y) \ \mathrm{d}y.
\end{flalign}
Now we perform integration by parts in the individual terms in the right-hand side of \eqref{e5.1}. 
\begin{flalign}
- \int_{Q_1} & \curl{y}\bigl(\epsilon_1^{-1}(y){\curl{x}{H^1(x,y)}}\bigr)\phi(y)\,\mathrm{d}y  =  - \int_{\partial{Q_0}}\bigl(\epsilon_1^{-1}(y)
\curl{x}{H^1(x,y)} \times n(y) \big\vert_{+}\bigr)\hspace{1pt}\cdot\phi(y)\,\mathrm{d}y \nonumber \\
\overset{\text{by \eqref{e3.2}}}{=} &  \int_{\partial{Q_0}}\Bigl\{\bigl(\epsilon_1^{-1}(y)(\curl{y}{H^2(x,y)} \times n(y) \big\vert_{+}\bigr)\hspace{1pt}\cdot\phi(y) + \bigl(\epsilon_0^{-1}(y)\curl{y}{H^0(x,y)} \times n(y) \big\vert_{-} \bigr)\hspace{1pt}\cdot
\phi(y)\Bigr\}\,\mathrm{d}y \nonumber \\
= & \int_{\partial{Q_0}}\bigl(\epsilon_1^{-1}(y)\curl{y}{H^2(x,y)} \times n(y) \big\vert_{+}\bigr)\hspace{1pt}\cdot\phi(y)+
\int_{Q_0} \curl{y}\bigl(\epsilon_0^{-1}(y){\curl{y}{H^0(x,y)}}\bigr)\cdot\phi(y)\,\mathrm{d}y\nonumber\\
&-\int_{Q_0}\epsilon_0^{-1}(y)\curl{y}{H^0(x,y)}\cdot\curl{y}{\phi(y)}\,\mathrm{d}y \nonumber \\
\overset{\text{by \eqref{e4.1}}}{=} & \int_{\partial{Q_0}}\bigl(\epsilon_1^{-1}(y)\curl{y}{H^2(x,y)} \times n(y) \big\vert_{+}\bigr)\hspace{1pt}\cdot\phi(y) + \int_{Q_0} \omega^2H^0(x,y)\cdot\phi(y)\,\mathrm{d}y\nonumber\\ 
&- \int_{Q_0} \epsilon_0^{-1}(y)\curl{y}{H^0(x,y)}\cdot\curl{y}{\phi(y)} \,\mathrm{d}y \label{e5.3}
\end{flalign}
Taking into account the representations \eqref{H0} and \eqref{H1}, we find that
\begin{flalign}
\int_{Q_1} &\Bigl\{ \curl{x}\bigl(\epsilon_1^{-1}(y){\curl{x}{H^0(x,y)}}\bigr)\cdot\phi(y) + \curl{x}\bigl(\epsilon_1^{-1}(y){\curl{y}{H^1(x,y)}}\bigr)
\cdot\phi(y)\Bigr\}\,\mathrm{d}y \nonumber \\
& = \int_{Q_1} \curl{x}\Bigl\{\epsilon_1^{-1}(y){\Bigl(\big(I+N(y)\big) \curl{x}{u(x)} + \curl{x}{\nabla_y v(x,y)} + 
	\curl{y}{\nabla_x v(x,y)}\Bigr)}\Bigr\}\cdot\phi(y)\,\mathrm{d}y \nonumber \\
& = \int_{Q_1} \curl{x}\Bigl\{\epsilon_1^{-1}(y){\Big(\big( I + N(y) \big) \curl{x}{u(x)} \Big)}\Bigr\}\cdot\phi(y)\,\mathrm{d}y, \label{e5.4}
\end{flalign}
where we again make use of the identity $\curl{x}{\nabla_y}=-\curl{y}{\nabla_x}.$ Finally, equations \eqref{e5.1}--\eqref{e5.4} imply 
\begin{multline}
\int_{Q_1} \curl{x}\Bigl\{\epsilon_1^{-1}(y){\Bigl(\big(I + N(y)\big)\curl{x}{u(x)}\Bigr)}\Bigr\}\cdot\phi(y)\,\mathrm{d}y + \int_{Q_0} \epsilon_0^{-1}(y)\curl{y}{H^0(x,y)}\cdot\curl{y}{\phi(y)}\,\mathrm{d}y \\ = \int_{Q} \omega^2H^0(x,y)\cdot\phi(y)\,\mathrm{d}y. \ \ \ \ \ \ \ \  { \forall \phi \in V } \label{hom.1}
\end{multline}

In what follows we derive the system (\ref{me1})--(\ref{me3}) by considering different choices of the test function $\phi$ in the identity \eqref{hom.1}.

Step 1. Choosing test functions $\phi \in\bigl[C^\infty_0(Q_0)\bigr]^3$ in \eqref{hom.1} we find that
$$
\curl{y}\bigl(\epsilon_0^{-1}(y){\curl{y}{H^0(x,y)}}\bigr)= \omega^2 H^0(x,y) \qquad y \in Q_0.
$$
Using the representation \eqref{H0} and the identity $\curl{y}{\nabla_y} = 0$, we arrive at  \eqref{me3}.

Step 2.  { Choosing $\phi = \nabla_y\psi$ in \eqref{hom.1}, performing integration by parts, using the identity $\div{y}{\curl{x}{}} = -\div{x}{\curl{y}{}}$ and recalling \eqref{e2.4} gives}
\begin{flalign*}
\int_{Q} \omega^2H^0(x,y)\cdot\nabla_y\psi\,\mathrm{d}y & =\int_{Q_1} \curl{x}{\Bigl\{\epsilon_1^{-1}(y)\Big(\big(I + N(y) \big) \curl{x}{u(x)} \Big)\Bigr\}}\cdot\nabla_y \psi(y)\,\mathrm{d}y \\
& =  \int_{Q_1} \div{y}{ \curl{x}{\Bigl\{\epsilon_1^{-1}(y)\Big(\bigl(I + N(y)\bigr)\curl{x}{u(x)}\Big)}\Bigr\}}\cdot\psi(y) \,\mathrm{d}y 
\\
& = -\int_{Q_1} \div{x}{ \curl{y}{\Bigl\{\epsilon_1^{-1}(y)\Big(\bigl( I + N(y)\bigr)\curl{x}{u(x)} \Big)}\Bigr\}}\cdot\psi(y)\,\mathrm{d}y= 0.
\end{flalign*} 
Therefore, we deduce that
\begin{equation}
\div{y}{H^0(x,y)} = 0, \qquad y \in Q,
\label{H0solenoidal}
\end{equation}
and taking into account \eqref{H0} we obtain the equation \eqref{me2}. 

Step 3. Choosing $\phi(y) \equiv 1$ in the identity\eqref{hom.1} we find, using the representation  \eqref{H0} once   {more}, that \eqref{me1} holds, where the matrix $A^{\rm hom}$ emerges as the result of integrating the expression ${\rm curl}_y N(y)+I$ with respect to 
$y\in Q_1.$ 

In the next section we use the above formal construction of the series (\ref{e1.2}) to justify the two claims of Theorem 
\ref{maintheorem}.

\section{Proof of Theorem \ref{maintheorem}}



For each $\eta>0,$ denote by ${\mathcal A}_\eta$ the operator in the space\footnote{We denote by $L^2_{\rm {\#}sol}(\mathbb T)$ the closure of the set of smooth divergence-free vector fields on  ${\mathbb T}$ with respect to the $L^2(\mathbb T)$-norm.}
 $L^2_{\rm {\#}sol}(\mathbb T)$ defined in a standard way by the bilinear form ({\it cf.} (\ref{e1.1})) 
{\[
\int_{\mathbb T}\varepsilon_\eta^{-1}\bigl(\tfrac{\cdot}{\eta}\bigr)\,{\rm curl}\,u\cdot {\rm curl}\,v,\ \ \ \ \ \  u,v\in [H^1_{\#}({\mathbb T})]^3\cap L^2_{\rm {\#}sol}(\mathbb T)
=:{\mathcal H}.
\]}
{ For fixed $\omega$ in the spectrum of \eqref{me1}--\eqref{me2}, let $H^0$ be a corresponding eigenfunction. Consider the (unique) solution $\widetilde{H}^\eta\in{\mathcal H}$ to the problem
\begin{equation}
({\mathcal A}_\eta+I)\widetilde{H}^\eta=(\omega^2+1)H^0(\cdot, \tfrac{\cdot}{\eta}).
\label{Htildeeq}
\end{equation}
Denote also 
\[
{\mathfrak b}_\eta(u,v):=
\int_{\mathbb T}\varepsilon_\eta^{-1}\bigl(\tfrac{\cdot}{\eta}\bigr)\,{\rm curl}\,u\cdot {\rm curl}\,v+
\int_{\mathbb T}u\cdot v,\ \ \ \ u,v\in [H^1_{\#}({\mathbb T})]^3,
\]
and ({\it cf.} (\ref{e1.2}))
\begin{equation}
H^{(2)}(\cdot, \eta):=H^0\bigl(\cdot, \tfrac{\cdot}{\eta}\bigr)+\eta H^1\bigl(\cdot, \tfrac{\cdot}{\eta}\bigr)+\eta^2 H^2\bigl(\cdot, \tfrac{\cdot}{\eta}\bigr),
\label{H2eqn}
\end{equation}
where $H^j,$ $j=1, 2,$ are solution of the system of recurrence relations described in Section \ref{expansionsection}. The existence of solutions $H^1$, $H^2$ is guaranteed by a result we established in \cite[Lemma 3.4]{maxCh}. As these solutions are unique up to the addition of an element from $V$, we shall choose them as in  Remark \ref{last_remark}. }

\begin{prop} There exists a constant $\widehat{C}>0$ such that the estimate 
	\begin{equation}	{\mathfrak b}_\eta\bigl(\widetilde{H}^\eta-H^{(2)}(\cdot, \eta), \varphi\bigr)\le\widehat{C}\eta\sqrt{{\mathfrak b}_\eta(\varphi, \varphi)}
	\label{bform_est}
	\end{equation}
	holds for all { $\varphi\in [H^1_{\#}(\mathbb T)]^3.$}
\end{prop}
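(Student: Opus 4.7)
The plan is a ``plug-and-check'' in the standard spirit of two-scale justification: substitute the ansatz $H^{(2)}$ into the bilinear form, use the cell equations (\ref{e1.3})--(\ref{e4.2}) together with the matching conditions (\ref{e1.4}), (\ref{e2.2}), (\ref{e3.2}) to cancel the leading orders in $\eta$, and estimate the $O(\eta)$ remainder. Using the equation defining $\widetilde{H}^\eta$, one has
\[
\mathfrak{b}_\eta\bigl(\widetilde{H}^\eta - H^{(2)}(\cdot, \eta), \varphi\bigr) = (\omega^2+1)\int_{\mathbb T}H^0\bigl(\cdot, \tfrac{\cdot}{\eta}\bigr)\cdot\varphi\, dx - \mathfrak{b}_\eta\bigl(H^{(2)}(\cdot, \eta), \varphi\bigr),
\]
so the task is to show the right-hand side is of order $\eta\sqrt{\mathfrak{b}_\eta(\varphi, \varphi)}$. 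Since $\widetilde{H}^\eta \in \mathcal{H}$, one may first reduce to $\varphi \in \mathcal{H}$ via a Helmholtz decomposition $\varphi = \varphi_{\rm sol} + \nabla \psi$: the gradient piece pairs with ${\rm div}(\widetilde{H}^\eta - H^{(2)})$, which is $O(\eta)$ in $L^2$ by (\ref{H0solenoidal}) provided $H^1$, $H^2$ are selected divergence-free in the two-scale sense afforded by Remark \ref{last_remark}.

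Next, I would expand ${\rm curl}\,H^{(2)}$ via the chain rule ${\rm curl}[f(x, x/\eta)] = ({\rm curl}_x f)(x, x/\eta) + \eta^{-1}({\rm curl}_y f)(x, x/\eta)$, split $\int_{\mathbb T}$ into the integrals over $\Omega_0^\eta := \eta(Q_0 + \mathbb Z^3)\cap{\mathbb T}$ and $\Omega_1^\eta := {\mathbb T}\setminus\overline{\Omega_0^\eta}$, and integrate by parts cellwise. In $\Omega_1^\eta$, where $\varepsilon_\eta^{-1} = \varepsilon_1^{-1}$, the expansion of ${\rm curl}(\varepsilon_1^{-1}{\rm curl}\,H^{(2)})$ has the order-$\eta^{-2}$ and order-$\eta^{-1}$ terms killed by (\ref{e1.3}) and (\ref{e2.1}), and the order-$\eta^0$ term equals $\omega^2 H^0(\cdot, \cdot/\eta)$ by (\ref{e3.1}). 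In $\Omega_0^\eta$, where $\varepsilon_\eta^{-1} = \eta^2\varepsilon_0^{-1}$, the leading bulk term is of order $\eta^0$ and likewise equals $\omega^2 H^0$ by (\ref{e4.1}). The integration by parts also generates surface integrals on $\partial\Omega_0^\eta$ involving the jump of $\varepsilon_\eta^{-1}{\rm curl}\,H^{(2)} \times n$ across $\partial Q_0$; the orders $\eta^{-1}$, $\eta^0$, $\eta^1$ of this jump are killed successively by (\ref{e1.4}), (\ref{e2.2}), (\ref{e3.2}), leaving a pointwise surface residual of order $\eta^2$. After these cancellations, the $O(1)$ contributions exactly match $(\omega^2+1)\int H^0\cdot\varphi$, and the remainder has the schematic form
\[
\eta\int_{\mathbb T}R(x, \tfrac{x}{\eta})\cdot\varphi\,dx \; + \; \eta^2\int_{\partial\Omega_0^\eta}S(x, \tfrac{x}{\eta})\cdot\varphi\,d\sigma,
\]
with $R$ uniformly bounded in $L^2(\mathbb{T})$ (in terms of the Sobolev norms of $H^0, H^1, H^2$) and $S$ uniformly bounded on the interfaces.

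The bulk term is immediately $\le C\eta\|\varphi\|_{L^2(\mathbb T)}\le C\eta\sqrt{\mathfrak{b}_\eta(\varphi, \varphi)}$ by Cauchy--Schwarz. The main obstacle is the boundary integral: there are $O(\eta^{-3})$ interfaces of area $O(\eta^2)$, so $|\partial\Omega_0^\eta|=O(\eta^{-1})$ and $\|S(\cdot,\tfrac{\cdot}{\eta})\|_{L^2(\partial\Omega_0^\eta)}=O(\eta^{-1/2})$, while the rescaled cellwise trace inequality
\[
\|\varphi\|_{L^2(\partial\Omega_0^\eta)}^2 \;\le\; C\bigl(\eta^{-1}\|\varphi\|_{L^2(\mathbb T)}^2 + \eta\|\nabla\varphi\|_{L^2(\mathbb T)}^2\bigr)
\]
brings in $\|\nabla\varphi\|_{L^2(\mathbb T)}$, which is \emph{not} directly controlled by $\mathfrak{b}_\eta(\varphi, \varphi)$. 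To close the estimate one exploits that for $\varphi\in\mathcal{H}$ the periodic curl--div inequality yields $\|\nabla\varphi\|_{L^2(\mathbb T)}\le C(\|{\rm curl}\,\varphi\|_{L^2(\mathbb T)}+\|\varphi\|_{L^2(\mathbb T)})$, while splitting $\|{\rm curl}\,\varphi\|_{L^2(\mathbb T)}^2$ across the two regions and using $\varepsilon_\eta^{-1}|_{\Omega_0^\eta}=\eta^2\varepsilon_0^{-1}$ gives $\|{\rm curl}\,\varphi\|_{L^2(\Omega_0^\eta)}^2 \le C\eta^{-2}\mathfrak{b}_\eta(\varphi, \varphi)$, so $\|\nabla\varphi\|_{L^2(\mathbb T)}\le C\eta^{-1}\sqrt{\mathfrak{b}_\eta(\varphi, \varphi)}$. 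Combining, the boundary term is bounded by $\eta^2\cdot \eta^{-1/2}\cdot C\eta^{-1/2}\sqrt{\mathfrak{b}_\eta(\varphi, \varphi)}=C\eta\sqrt{\mathfrak{b}_\eta(\varphi, \varphi)}$, which concludes the proof.
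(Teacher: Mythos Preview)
Your argument is correct, but it takes a genuinely different route from the paper's proof. The paper never integrates by parts across the cell interfaces and therefore never generates a surface integral on $\partial\Omega_0^\eta$. Instead, it leaves ${\rm curl}\,\varphi$ in place and organises the residual directly in the weak form as
\[
{\mathfrak b}_\eta\bigl(\widetilde{H}^\eta-H^{(2)}(\cdot,\eta),\varphi\bigr)=\int_{\mathbb T}F^1(\cdot,\eta)\cdot\varphi+\int_{\mathbb T}F^2(\cdot,\eta)\cdot\bigl(\eta\,{\rm curl}\,\varphi\bigr),
\]
with explicit two-scale expressions $F^1,F^2$ built from $H^0,H^1,H^2$ that are each $O(\eta)$ in $L^2(\mathbb T)$; the estimate then follows at once from H\"older and the elementary bound $\|\varphi\|_{L^2}^2+\|\eta\,{\rm curl}\,\varphi\|_{L^2}^2\le C\,{\mathfrak b}_\eta(\varphi,\varphi)$, valid for \emph{all} $\varphi\in[H^1_\#(\mathbb T)]^3$. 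Your approach, by contrast, pushes all derivatives onto $H^{(2)}$, which forces you to (i) split off the gradient part of $\varphi$ by a Helmholtz decomposition, (ii) invoke the periodic curl--div identity $\|\nabla\varphi\|_{L^2}\le C(\|{\rm curl}\,\varphi\|_{L^2}+\|\varphi\|_{L^2})$ on the solenoidal piece, and (iii) run a scaled trace estimate on $\partial\Omega_0^\eta$. Your power-counting for the interface term is right and the conclusion closes, but the detour through Helmholtz and traces is avoidable: the paper's device of pairing the leftover flux with $\eta\,{\rm curl}\,\varphi$ rather than $\varphi$ is exactly what absorbs the interface contribution without ever writing it down. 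The trade-off is that your method is the more ``classical'' strong-form justification and makes the role of the matching conditions (\ref{e1.4}), (\ref{e2.2}), (\ref{e3.2}) very transparent, whereas the paper's argument is shorter and needs no auxiliary inequalities beyond Cauchy--Schwarz.
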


\begin{proof}
	Using the definition of the function $\widetilde{H}^\eta$ and the recurrence relations (\ref{e1.3})--(\ref{e4.2}) 
	yields
{
	\begin{equation}
		\label{remainders.0}
		\begin{aligned}
	{\mathfrak b}_\eta\bigl(  \widetilde{H}^\eta-H^{(2)}(\cdot, \eta), & \varphi\bigr)  = \int_{\mathbb T}\varepsilon_\eta^{-1}\bigl(\tfrac{\cdot}{\eta}\bigr){\rm curl}\,\widetilde{H}^\eta\cdot{\rm curl}\varphi+ \int_{\mathbb T}\widetilde{H}^\eta\cdot\varphi \\[0.5em]
	&	-\int_{\mathbb T}\varepsilon_\eta^{-1}\bigl(\tfrac{\cdot}{\eta}\bigr){\rm curl}\Bigl(H^0\bigl(\cdot, \tfrac{\cdot}{\eta}\bigr)+\eta H^1\bigl(\cdot, \tfrac{\cdot}{\eta}\bigr)+{\eta^2} H^2\bigl(\cdot, \tfrac{\cdot}{\eta})\Bigr)\cdot{\rm curl}\,\varphi \\[0.5em]
	& 
	-\int_{\mathbb T}\Bigl(H^0\bigl(\cdot, \tfrac{\cdot}{\eta}\bigr)+\eta H^1\bigl(\cdot, \tfrac{\cdot}{\eta}\bigr)+{ {\eta^2 }}H^2\bigl(\cdot, \tfrac{\cdot}{\eta}\bigr)\Bigr)\cdot\varphi \\[0.5em]
	& = \int_{\mathbb{T}} F^1( \cdot , \eta) \cdot \phi + \int_{\mathbb{T}} F^2(\cdot , \eta) \cdot \eta\,\curl{}{\phi}.
			\end{aligned}
	\end{equation}
Here,  $F^1, F^2$ are elements of $L^2(\mathbb{T})$ defined for a.e. $x \in \mathbb{T}$ by 
\begin{eqnarray}
F^1(x,\eta)= -\eta \Bigl(  \chi_0(y){\rm curl}_x\bigl(\epsilon_0^{-1}(y){\rm curl}_y H^0(x, y)\bigr)\ \ \ \ \ \ \ \ \ \ \ \ \ \ \ \ \ \ \ \ \ \ \ \ \ \ \ \ \ \ \ \ \ \ \ \ \ \ \ \ \ \ \ \ \ \ \ \ \ \ \ \ \ \ \ \ \ \ \ \ \ \nonumber \\[0.4em]
\ \ \ \ \ \ \ \ +\chi_1(y)\Big\{{\rm curl}_x\bigl(\epsilon_1^{-1}(y){\rm curl}_x H^1(x,y)\bigr)+{\rm curl}_x\bigl(\epsilon_1^{-1}(y){\rm curl}_y H^2(x,y)\bigr)\Big\} +H^1(x,y)+\eta H^2(x,y)\Bigr)\Bigr\vert_{y=\tfrac{x}{\eta}},\nonumber \\[0.4em]
F^2(x,\eta)=-\eta\Bigl(\chi_0(y)\epsilon_0^{-1}(y)\big\{{\rm curl}_x H^0(x,y)+{\rm curl}_y H^1(x,y)+\eta\,{\rm curl}_x H^1(x,y)\ \ \ \ \ \ \ \ \ \ \ \ \ \ \ \ \ \ \ \ \ \ \  \ \nonumber \\[0.4em]
+\eta\,{\rm curl}_y H^2(x,y) + {\eta^2\,{\rm curl}_x H^2(x,y)}\big\} +\chi_1(y)\epsilon_1^{-1}(y){\rm curl}_xH^2(x,y)\Bigr)\biggr\vert_{y=\tfrac{x}{\eta}}.\ \ \ \ \ \ \ \ \ \ \ \ \ \ \ \ \ \ \ \ \ \ \ \ \ \ \label{remainders}
\end{eqnarray}	
Notice that the functions $H^0=H^0(x,y), H^1=H^1(x,y), H^2=H^2(x,y)$ all belong to the space 
	$C^\infty_{\#}\bigl({\mathbb T}, H^1_{\#}(Q)\bigr)$. Indeed, this is seen to be true for $H^0$ by Proposition \ref{prop:specrep}; in the case of $\omega = \alpha_k$ we choose $w \in C^\infty_{\#}(\mathbb{T})$. The assertions for $H^1$ and $H^2$ now follow from  formula (\ref{H1}) for the corrector
	$H^1(x,y),$ and the boundary-value problem (\ref{e3.1})--(\ref{e3.2}) for the function $H^2(x,y).$ 	It then follows from \eqref{remainders} that $
 	|| F^1 (\cdot, \eta) ||_{L^2(\mathbb{T})} \le C \eta$, $|| F^2 (\cdot, \eta) ||_{L^2(\mathbb{T})} \le C \eta$, and by applying the H\"{o}lder inequality to \eqref{remainders.0} we deduce that  
	\[
	{\mathfrak b}_\eta\bigl(\widetilde{H}^\eta-H^{(2)}(\cdot, \eta), \varphi\bigr)\le C\eta \biggl(\int_{\mathbb T}\vert\varphi\vert^2+\int_{\mathbb T}\vert\eta\,{\rm curl}\,\varphi\vert^2\biggr)^{1/2},
	\]
	as required.}
\end{proof}

The above proposition 
implies the following statement.

\begin{thm} 
	\label{last_theorem}
	There exists a constant C such that estimate
	$\bigl\Vert\widetilde{H}^\eta-H^0(\cdot, \cdot/\eta)\bigr\Vert_{L^2(\mathbb T)}\le C\eta$
	holds for all $\eta.$
\end{thm}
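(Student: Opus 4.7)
The plan is to use the bilinear form estimate \eqref{bform_est} in the standard ``test against the error" fashion, then pass to the desired $L^2$-estimate by the triangle inequality.

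First, I would observe that since $H^0, H^1, H^2 \in C^\infty_{\#}\bigl(\mathbb{T}, H^1_{\#}(Q)\bigr)$ and $\eta^{-1}\in\mathbb{N}$, the function $H^{(2)}(\cdot, \eta)$ defined in \eqref{H2eqn} belongs to $[H^1_{\#}(\mathbb T)]^3.$ Combined with $\widetilde{H}^\eta\in\mathcal{H}\subset[H^1_{\#}(\mathbb T)]^3$, the difference $\varphi:=\widetilde{H}^\eta - H^{(2)}(\cdot,\eta)$ is an admissible test function in the preceding proposition.

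Substituting this choice of $\varphi$ into \eqref{bform_est} yields
\[
{\mathfrak b}_\eta(\varphi,\varphi) \;\le\; \widehat{C}\eta\sqrt{{\mathfrak b}_\eta(\varphi,\varphi)},
\]
from which $\sqrt{{\mathfrak b}_\eta(\varphi,\varphi)}\le \widehat{C}\eta$ and hence ${\mathfrak b}_\eta(\varphi,\varphi)\le \widehat{C}^2\eta^2$. Since the bilinear form ${\mathfrak b}_\eta$ contains the term $\int_{\mathbb T}|\varphi|^2$ with unit weight, this immediately gives
\[
\bigl\Vert \widetilde{H}^\eta - H^{(2)}(\cdot,\eta)\bigr\Vert_{L^2(\mathbb T)} \;\le\; \widehat{C}\eta.
\]

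To conclude, I would apply the triangle inequality:
\[
\bigl\Vert\widetilde{H}^\eta - H^0(\cdot,\cdot/\eta)\bigr\Vert_{L^2(\mathbb T)} \;\le\; \bigl\Vert\widetilde{H}^\eta - H^{(2)}(\cdot,\eta)\bigr\Vert_{L^2(\mathbb T)} + \eta\bigl\Vert H^1(\cdot,\cdot/\eta)\bigr\Vert_{L^2(\mathbb T)} + \eta^2\bigl\Vert H^2(\cdot,\cdot/\eta)\bigr\Vert_{L^2(\mathbb T)}.
\]
By the smoothness of $H^1, H^2$ in the slow variable and their $H^1_{\#}(Q)$-regularity in the fast variable, together with the fact that $\eta^{-1}\in\mathbb{N}$, the $L^2(\mathbb T)$-norms of $H^j(\cdot,\cdot/\eta)$ ($j=1,2$) are bounded uniformly in $\eta$ (one may, for example, decompose $\mathbb T$ into $\eta^{-3}$ shifted copies of $\eta Q$ and change variables in each). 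Hence the right-hand side is bounded by $C\eta$ for some constant $C$ independent of $\eta$, completing the proof.

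There is no real obstacle here; all the work has been done in the preceding proposition. The only point requiring a line of justification is that the admissibility of $\varphi=\widetilde{H}^\eta-H^{(2)}(\cdot,\eta)$ as a test function in \eqref{bform_est}, and the uniform boundedness of the two-scale correctors evaluated along $y=x/\eta$, both of which follow directly from the smoothness statements already recorded in the proof of the proposition and from the periodicity assumption $\eta^{-1}\in\mathbb{N}$.
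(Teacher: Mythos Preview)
Your proposal is correct and follows essentially the same route as the paper: test \eqref{bform_est} with $\varphi=\widetilde{H}^\eta-H^{(2)}(\cdot,\eta)$ to obtain $\|\widetilde{H}^\eta-H^{(2)}(\cdot,\eta)\|_{L^2(\mathbb T)}\le\widehat{C}\eta$, then use the triangle inequality and the $O(\eta)$ bound on $H^{(2)}-H^0$. Your version simply spells out a little more explicitly the admissibility of the test function and the uniform $L^2$-boundedness of the correctors $H^j(\cdot,\cdot/\eta)$.
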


\begin{proof}
	Setting $\varphi=\widetilde{H}^\eta-H^{(2)}(\cdot, \eta)$ in the estimate (\ref{bform_est}) yields
	\[
	\widehat{C}^2\eta^2\ge{\mathfrak b}_\eta\bigl(\widetilde{H}^\eta-H^{(2)}(\cdot, \eta),\,\widetilde{H}^\eta-H^{(2)}(\cdot, \eta)\bigr)
	\ge\bigl\Vert\widetilde{H}^\eta-H^{(2)}(\cdot, \eta)\bigr\Vert^2_{L^2(\mathbb T)}.
	\]
	The claim of the theorem now follows, by noting that in view of (\ref{H2eqn}) we have 
	\[
	\bigl\Vert H^{(2)}(\cdot, \eta)-H^0(\cdot, \cdot/\eta)\bigr\Vert_{L^2(\mathbb T)}\le\widetilde{C}\eta
	\]
	for some $\widetilde{C}>0,$ and hence
	\[
	\bigl\Vert\widetilde{H}^\eta-H^0(\cdot, \cdot/\eta)\bigr\Vert_{L^2(\mathbb T)} 
	\le\bigl\Vert\widetilde{H}^\eta-H^{(2)}(\cdot, \eta)\bigr\Vert_{L^2(\mathbb T)}+\bigl\Vert H^{(2)}(\cdot, \eta)-H^0(\cdot, \cdot/\eta)\bigr\Vert_{L^2(\mathbb T)}\le(\widehat{C}+\widetilde{C})\eta,
	\]
	as required.
\end{proof}




The claims of Theorem \ref{maintheorem} now follow from the estimate
\begin{equation}
\bigl\Vert\bigl((\omega^2+1)^{-1}-({\mathcal A}_\eta+I)^{-1}\bigr)H^0(\cdot, \cdot/\eta)\bigr\Vert_{L^2(\mathbb T)}
\le(\omega^2+1)^{-1}\bigl\Vert H^0(\cdot, \cdot/\eta)-\widetilde{H}^\eta\bigr\Vert_{L^2(\mathbb T)}\le C\eta,
\label{lastest}
\end{equation}
where we used 	the definition (\ref{Htildeeq}) of the function $\widetilde{H}^\eta$ and 	
Theorem \ref{last_theorem}. Indeed, from \cite[p.\,109]{VL}, we infer that the quantities ${\rm dist}\bigl((\omega^2+1)^{-1}, {\rm Sp}\bigl(({\mathcal A}_\eta+1)^{-1}\bigr)\bigr)$ and ${\rm dist}\bigl((\omega^2+1)^{-1}H^0(\cdot,\cdot/\eta), X_\eta\bigr)$
are controlled above by the right-hand side of (\ref{lastest}), which completes the proof of Theorem \ref{maintheorem}.

\begin{remark}
	\label{last_remark}
	Note that $H^{(2)}$ is not solenoidal in general, but can be defined in such a way that it is ``close'' to a solenoidal field, thanks to the equation (\ref{H0solenoidal}) (equivalently, (\ref{me2})) and the special 
	choice of the function $H^1$ so that 
	\[
	{\rm div}_xH^0(x,y)+{\rm div}_yH^1(x,y)=0\ \ \ {\rm a.e.}\ (x,y)\in{\mathbb T}\times Q.
	\]
	The function $H^{(2)}$ thus defined is $\eta$-close to the eigenspace $X_\eta$ in the norm of $[H^1_{\#}(Q)]^3.$
\end{remark}

\section*{Appendix: Symmetry of $A^{\rm hom}$ and $\Gamma(\omega)$ under rotations}

Suppose that $A \in\bigl[L^\infty(Q)\bigr]^{3\times3}$ is symmetric such that $A\ge\nu I$ on $Q_1,$ $\nu>0$ and $A \equiv 0$ on $Q_0$. 
Consider the matrix	
	\begin{equation*}
	A^{\rm hom}_{pq}:=\int_{Q} A \bigl( {\rm curl}{N}_{p}^q + \delta_{pq}  \bigr) \qquad  p,q \in \{1,2,3\},
	\end{equation*}
	where $N^q$ is the unique solution  to the problem ({\it cf.} \cite{KamSmysh2013}, \cite[Lemma 3.4]{maxCh} and \eqref{e2.4} above for $a = \chi_1$, the characteristic function of $Q_1$)	
	\[
	{\rm curl} \,{ \big(A[ {\rm curl}\,N^q + e_q]\big)} = 0,\ \ \ \ \ \ \ N^q\in \{ u \in [H^1_{\#}(Q)]^3 : A\,{\rm curl}\,u =0 \}^\perp.
	\]
Here the superscript ``$\perp$'' denotes the orthogonal complement in $[H^1_{\#}(Q)]^3.$ Notice that if, for fixed $\zeta \in \RR^3$, we multiply each of the above equations by $\zeta_q$ then  
	\begin{equation}
	\label{defahom}
	A^{\rm hom} \zeta =\int_{Q} A\, \bigl( {\rm curl}{N_\zeta} +  \zeta  \bigr),
	\end{equation}
where the vector $N_\zeta$, whose components are $N^q_p \zeta_q$, is the unique solution to the problem
	\begin{equation}
	\label{cellxi}
	{\rm curl}\, {\big( A[ {\rm curl} N_\zeta + \zeta]\big)} = 0,\ \ \ \ \ \ \ N_\zeta\in \{ u \in [H^1_{\#}(Q)]^3 : A\,{\rm curl}\,u =0 \}^\perp.
		\end{equation}
	It is clear that the matrix representation of the bounded linear mapping $ \zeta \mapsto \int_{Q}  A\, \bigl( {\rm curl}{N_\zeta} +  \zeta  \bigr)$ is equal to $A^{\rm hom}$. The following property holds.
\begin{prop}
\label{Ahom_property}
Suppose that $\sigma$ is a rotation such that $\sigma Q = Q$ and assume that
\begin{equation}
\label{asym}
A(y)= \sigma^{-1}A(\sigma y ) \sigma,\ \ \ \ y\in Q.
\end{equation}
Then , $A^{\rm hom}$ 
inherits the same symmetry, {\it i.e.} 
one has
\begin{equation}
\label{ahomsym}
A^{\rm hom}=\sigma^{-1}A^{\rm hom}\sigma.
\end{equation}
In particular, $A^{\rm hom}_{kl} = A^{\rm hom}_{lk} =  0$, for all $l \neq k$.
\end{prop}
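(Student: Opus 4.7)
The plan is to derive the symmetry of $A^{\rm hom}$ directly from the cell problem (\ref{cellxi}) by exhibiting a canonical correspondence $N_\zeta \leftrightarrow N_{\sigma\zeta}$ induced by $\sigma$. The key analytic input is the elementary transformation rule that, for any $u \in [H^1_\#(Q)]^3$ and any rotation $\sigma$ with $\det\sigma=1$, the field $M(y) := \sigma u(\sigma^{-1}y)$ satisfies
\[
{\rm curl}\,M(y) = \sigma({\rm curl}\,u)(\sigma^{-1}y), \qquad y \in Q,
\]
which is a consequence of the identity $\varepsilon_{abc}\sigma_{ai}\sigma_{bj}\sigma_{ck}=\varepsilon_{ijk}$. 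Since $\sigma Q = Q$, the map $u \mapsto \sigma u(\sigma^{-1}\cdot)$ is an automorphism of $[H^1_\#(Q)]^3$.

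Given $\zeta\in\mathbb{R}^3$, I set $M_\zeta(y) := \sigma N_\zeta(\sigma^{-1}y)$ and verify that $M_\zeta$ is a weak solution of (\ref{cellxi}) with data $\sigma\zeta$. Testing against an arbitrary $\varphi \in [H^1_\#(Q)]^3$, changing variables $y = \sigma y'$ (which preserves $Q$ and Lebesgue measure because $\sigma Q = Q$ and $|\det\sigma|=1$), invoking the symmetry (\ref{asym}) in the form $A(\sigma y') = \sigma A(y')\sigma^{-1}$, and then using the curl transformation rule on the test function to recognise $\sigma^{-1}({\rm curl}\,\varphi)(\sigma y')$ as ${\rm curl}\,\psi(y')$ with $\psi := \sigma^{-1}\varphi(\sigma\cdot)$, the relevant integral reduces to the weak form of (\ref{cellxi}) for $N_\zeta$ tested against $\psi$, which vanishes. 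It is important here that the weak formulation of (\ref{cellxi}) holds for all $[H^1_\#(Q)]^3$-valued test functions, not merely those in $V^\perp$, because $A\,{\rm curl}\,u\equiv 0$ for $u\in V$; this is what allows the change-of-variables argument to close without an additional projection step.

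Having established that $M_\zeta$ is such a weak solution, I compute
\[
\sigma A^{\rm hom}\zeta = \sigma\int_Q A(y)\bigl[{\rm curl}\,N_\zeta(y)+\zeta\bigr]dy = \int_Q A(y)\bigl[{\rm curl}\,M_\zeta(y)+\sigma\zeta\bigr]dy = A^{\rm hom}(\sigma\zeta),
\]
where the second equality follows from the same change of variables $y = \sigma y'$ combined with (\ref{asym}), and the last equality holds because any two weak solutions of (\ref{cellxi}) with the same data differ by an element of $V$, whose curl is annihilated by $A$; therefore the integral $\int_Q A[{\rm curl}(\cdot)+\sigma\zeta]$ is independent of which weak solution is chosen and coincides with its value on the canonical representative $N_{\sigma\zeta}$. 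This is precisely (\ref{ahomsym}).

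The vanishing of the off-diagonal entries then follows by specialising to two $\pi$-rotations about coordinate axes, both of which preserve $Q$: the choice $\sigma_1 = {\rm diag}(1,-1,-1)$ enforces $A^{\rm hom}_{12}=A^{\rm hom}_{13}=0$ via (\ref{ahomsym}), and $\sigma_2 = {\rm diag}(-1,1,-1)$ additionally enforces $A^{\rm hom}_{23}=0$. The same argument applied to the cell problem (\ref{Peqn})--(\ref{cond3}) that defines the columns of $B$ yields the corresponding identity $\Gamma(\omega) = \sigma^{-1}\Gamma(\omega)\sigma$, and hence the diagonal form of $\Gamma(\omega)$. The main obstacle is essentially bookkeeping: one must carefully track the action of $\sigma$ simultaneously on the domain $Q$ and on the codomain of vector fields, together with the observation on the reach of the weak formulation noted above.
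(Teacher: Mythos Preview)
Your argument is correct and, in fact, cleaner than the paper's. The paper does not use the pushforward $u\mapsto\sigma u(\sigma^{-1}\cdot)$ directly; instead, given $u$, it constructs an auxiliary field $\hat u={\rm curl}\,w$ by solving a periodic ${\rm curl}\,{\rm curl}$ problem for $w$, arranged so that ${\rm curl}_{y'}u(\sigma^{-1}y')=\sigma\,{\rm curl}\,\hat u(\sigma^{-1}y')$, and then compares the weak forms \eqref{cell1}--\eqref{cell2} and the bilinear expression $A^{\rm hom}\xi\cdot\eta$. Your route bypasses this detour: the identity ${\rm curl}\bigl(\sigma u(\sigma^{-1}\cdot)\bigr)=\sigma({\rm curl}\,u)(\sigma^{-1}\cdot)$ for $\det\sigma=1$ gives the covariance of the cell problem in one stroke, and your observation that the weak form of \eqref{cellxi} is automatically satisfied for \emph{all} test functions in $[H^1_\#(Q)]^3$ (since $A\,{\rm curl}$ annihilates $V$) is exactly what makes the change of variables close without projecting back to $V^\perp$. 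The paper uses the same fact implicitly when it identifies $N_{\sigma^{-1}\xi}$ as the $V^\perp$-projection of $\hat u$.

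One caveat on your final sentence: the claim for $\Gamma(\omega)$ does \emph{not} follow from ``the same argument applied to \eqref{Peqn}--\eqref{cond3}''. The problem defining $B^j$ is posed on a different space (${\mathcal H}_1$, with divergence and circulation constraints) and is coupled, via \eqref{decompose}, to a nonlocal term involving the periodic Green function $G$; the paper treats this separately by showing that the nonlocal bilinear form \eqref{Gamma_rot} is itself invariant under the substitution $\phi^k\mapsto\sigma\phi^k$, which requires the additional observation $G(\sigma\cdot)=G(\cdot)$. Your argument for $A^{\rm hom}$ stands on its own, but the extension to $\Gamma(\omega)$ needs those extra ingredients.
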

\begin{proof}
For each $u\in[H^1_{\#}(Q)]^3$ let 
$w$ be the solution of the vector equation
\begin{equation*}
\bigl({\rm curl}\,{\rm curl}\, w(y)\bigr)_\alpha=\sigma_{s\alpha}\epsilon_{slm}\sigma_{mr}\frac{\partial u_l(y)}{\partial y_r},\ \ \ \ \ \alpha=1,2,3,
\end{equation*}
in the space 
\[
{\mathcal H}:=\bigl\{w\in[H^1_{\#}(Q)]^3:\ {\rm div}\,w=0,\ \ \langle w \rangle=0\bigr\}.
\]
It is clear that such a solution exists. We denote by $\hat{u}$ the vector field ${\rm curl}\, w$.  A direct calculation, using the property $\sigma^{-1}=\sigma^\top$, yields   
\begin{equation}
{\rm curl}_{y'}u(\sigma^{-1}y')=\sigma{\rm curl}\,\hat{u}(\sigma^{-1}y').
\label{inverse}
\end{equation}
Therefore, for all $\varphi\in[H^1_{\#}(Q)]^3$, the above equality and the assumption \eqref{asym} imply
\begin{flalign*}
\int_Q A(y')\curl{y'}{u(\sigma^{-1}y')} \cdot \curl{y'}{\varphi(\sigma^{-1}y')}
\,dy' 
&
= \int_Q A(\sigma y)\sigma\curl{}{\hat{u}}(y)\cdot\sigma\curl{}{\hat{\varphi}}(y) \,{\rm d}y\nonumber
\\
& =  \int_Q A(y)\curl{}{\hat{u}}(y)\cdot\curl{}{\hat{\varphi}}(y) \,{\rm d}y.
\end{flalign*}
Hence, 
a function 
$u\in[H^1_{\#}(Q)]^3$ solves  
\begin{equation}
\label{cell1}
\int_Q A(y')\curl{y'}{u(\sigma^{-1}y')} \cdot \curl{y'}{\varphi(\sigma^{-1}y')} 
\,{\rm d}y' = \int_Qf(y') \cdot \curl{y'}{\varphi}(\sigma^{-1}y') \,{\rm d}y' \quad
\forall\varphi\in [H^1_{\#}(Q)]^3,
\end{equation}
if and only if $\hat{u}$
solves  
\begin{equation}
\label{cell2}
\int_Q A(y)\curl{}{\hat{u}(y)}\cdot \curl{}{\hat{\varphi} (y)} \,{\rm d}y = \int_Q \sigma^{-1}f(\sigma y) \cdot \curl{}{\hat{\varphi}}(y) \,{\rm d}y \quad \forall \hat{\varphi} \in[H^1_{\#}(Q)]^3.
\end{equation}

Let us now prove \eqref{ahomsym}. For fixed $\xi, \eta \in \RR^3$ let $N_\xi$ be the unique solution to \eqref{cellxi} and set $u(y) : = N_\xi( \sigma y)$, $y \in Q$. By \eqref{defahom},  assumption (\ref{asym}) and \eqref{inverse} we deduce that
\begin{flalign}
A^{\rm hom}\xi\cdot\eta &= \int_{Q}A(y')\bigl({\rm curl}_{y'}\,N_\xi (y') + \xi\bigr)\cdot\eta\,{\rm d}y'= \int_{Q}A(y')
\bigl({\rm curl}_{y'} u(\sigma^{-1}y') + \xi\bigr)\cdot\eta\,{\rm d}y' \nonumber \\
& \overset{\eqref{inverse}}{=} \int_{Q}A(y')
\bigl( \sigma {\rm curl} \hat{u}(\sigma^{-1}y') + \xi\bigr)\cdot\eta\,{\rm d}y' \overset{y'=\sigma y}{=}\int_{Q}A(\sigma y)\bigl(\sigma {\rm curl}\,\hat{u}(y) + \xi\bigr)\cdot\eta\,{\rm d}y \nonumber \\
& \overset{\eqref{asym}}{=} \int_{Q}A(y)
\bigl( {\rm curl} \hat{u}(y) + \sigma^{-1}\xi\bigr)\cdot\sigma^{-1}\eta\,{\rm d}y. \label{ahom2}
\end{flalign}
Since $N_\xi(y')$ solves \eqref{cellxi}, $u(\sigma^{-1}y')$ solves \eqref{cell1} for $f(y') = A(y') \xi$ and therefore $\hat{u}$ solves \eqref{cell2} where, by \eqref{asym}, $\sigma^{-1} f(\sigma y) = \sigma^{-1} A(\sigma y) \xi = A(y) \sigma^{-1} \xi$. Hence, the solution $N_{\sigma^{-1} \xi}$ to \eqref{cellxi}, for $\zeta = \sigma^{-1} \xi$,  is the projection of $\hat{u}$ onto the space $\{ u \in [H^1_{\#}(Q)]^3 : 
A\,{\rm curl}\,u =0 \}^\perp$ and the expression in (\ref{ahom2}) equals $A^{\rm hom}\sigma^{-1}\xi\cdot\sigma^{-1}\eta.$ The assertion \eqref{ahomsym} follows, in view of the arbitrary choice of $\xi,$ $\eta,$ and the equality $\sigma A^{\rm hom} \sigma^{-1} = \sigma^{-1} A^{\rm hom} \sigma $ which holds since $\sigma$ is unitary and $A^{\rm hom}$ is symmetric.
\end{proof}

\begin{corollary}
\label{symmetry}
If  \eqref{asym} holds for $\sigma=\sigma_k,$ where $\sigma_k$ is the rotation by $\pi$ around the $x_k$-axis,
then $A^{{\rm hom}}_{kl} = 0$, for all $l \neq k.$ 
\end{corollary}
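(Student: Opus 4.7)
The plan is to apply Proposition \ref{Ahom_property} essentially as a black box. The only point to check is that its hypothesis is satisfied: the rotation $\sigma_k$ by $\pi$ around the $x_k$-axis does preserve $Q$ viewed as the torus $\mathbb{R}^3/\mathbb{Z}^3$ (this is the standard way to read ``$\sigma Q = Q$'' in the periodic setting), and the symmetry assumption \eqref{asym} of the proposition is exactly the assumption of the corollary. Hence Proposition \ref{Ahom_property} yields
\[
A^{\rm hom} = \sigma_k^{-1}\,A^{\rm hom}\,\sigma_k.
\]

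The next step is to exploit the very simple matrix form of $\sigma_k$. As a linear map on $\mathbb{R}^3$, $\sigma_k$ fixes $e_k$ and sends $e_l\mapsto -e_l$ for $l\ne k$, so $\sigma_k$ is the diagonal matrix $\mathrm{diag}(\epsilon_1,\epsilon_2,\epsilon_3)$ with $\epsilon_k=1$ and $\epsilon_l=-1$ for $l\ne k$; in particular $\sigma_k^{-1}=\sigma_k$. Reading off entries of $\sigma_k^{-1}A^{\rm hom}\sigma_k$ then gives
\[
A^{\rm hom}_{ij} \;=\; \epsilon_i\,\epsilon_j\,A^{\rm hom}_{ij}, \qquad i,j\in\{1,2,3\}.
\]

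Finally, for $i=k$ and $j=l\ne k$ one has $\epsilon_i\epsilon_j=1\cdot(-1)=-1$, so the identity reduces to $A^{\rm hom}_{kl} = -A^{\rm hom}_{kl}$, which forces $A^{\rm hom}_{kl}=0$ for every $l\ne k$; the case $i\ne k$, $j=k$ is identical. This is the claim of the corollary. There is no genuine obstacle: all analytical work has been absorbed into Proposition \ref{Ahom_property}, and the corollary reduces to the elementary observation that conjugation by a diagonal sign-matrix kills exactly the off-diagonal entries linking the fixed axis to the reversed ones.
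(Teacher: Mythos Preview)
Your proof is correct and follows essentially the same approach as the paper: apply Proposition \ref{Ahom_property} to obtain $A^{\rm hom}=\sigma_k^{-1}A^{\rm hom}\sigma_k$, then use the explicit diagonal sign-matrix form of $\sigma_k$ to deduce that the $(k,l)$ entries with $l\neq k$ equal their own negatives and hence vanish. The paper simply writes out the $3\times 3$ matrix for $k=1$ rather than using your $\epsilon_i\epsilon_j$ notation, but the content is identical.
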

\begin{proof}
Indeed, say for $k=1$ \eqref{ahomsym} takes the form
$$
\left( \begin{matrix}
A^{\text{hom}}_{11} & A^{\text{hom}}_{12} & A^{\text{hom}}_{13} \\[3pt]
A^{\text{hom}}_{21} & A^{\text{hom}}_{22} & A^{\text{hom}}_{23} \\[3pt]
A^{\text{hom}}_{31} & A^{\text{hom}}_{32} & A^{\text{hom}}_{33} 
\end{matrix} \right) = \left( \begin{matrix}
A^{\text{hom}}_{11} & -A^{\text{hom}}_{12} & -A^{\text{hom}}_{13} \\[3pt]
-A^{\text{hom}}_{21} & A^{\text{hom}}_{22} & A^{\text{hom}}_{23} \\[3pt]
-A^{\text{hom}}_{31} & A^{\text{hom}}_{32} & A^{\text{hom}}_{33} 
\end{matrix} \right),
$$ 
and hence $A^{\text{hom}}_{12}=A^{\text{hom}}_{21}=A^{\text{hom}}_{13}=A^{\text{hom}}_{31}=0.$ 
\end{proof}
Similarly, direct calculation proves the following statement.
\begin{corollary}
	\label{symmetry.2}
	If  \eqref{asym} holds for $\sigma=\sigma_k,$ where $\sigma_k$ is the rotation by $\pi/2$ around the $x_k$-axis,
	then $A^{{\rm hom}}_{kl} = 0$, for all $l \neq k$ and $A^{\rm hom}_{ii}=A^{\rm hom}_{jj}$, $i,j \neq k$. 
\end{corollary}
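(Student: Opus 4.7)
The plan is to reuse Proposition \ref{Ahom_property}, which already gives the identity \eqref{ahomsym} under the hypothesis \eqref{asym}, and then perform a direct matrix computation analogous to the one in Corollary \ref{symmetry}, simply replacing the $\pi$-rotation matrix by the $\pi/2$-rotation matrix. No additional analytic input is required: \eqref{ahomsym} is the only tool needed, together with the fact (established in the proof of Proposition \ref{Ahom_property}) that $A^{\rm hom}$ is symmetric.

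Concretely, fix $k=1$ (the cases $k=2,3$ are identical up to relabelling of indices). The rotation by $\pi/2$ around the $x_1$-axis is represented by
\[
\sigma_1=\begin{pmatrix}1&0&0\\0&0&-1\\0&1&0\end{pmatrix},\qquad \sigma_1^{-1}=\sigma_1^{\top}=\begin{pmatrix}1&0&0\\0&0&1\\0&-1&0\end{pmatrix},
\]
so that $\sigma_1 Q = Q$. Rewriting \eqref{ahomsym} as $\sigma_1 A^{\rm hom} = A^{\rm hom}\sigma_1$ and reading off the nine entrywise identities, one obtains the relations
\[
A^{\rm hom}_{12}=A^{\rm hom}_{13},\qquad A^{\rm hom}_{13}=-A^{\rm hom}_{12},\qquad A^{\rm hom}_{23}=-A^{\rm hom}_{32},\qquad A^{\rm hom}_{22}=A^{\rm hom}_{33},
\]
together with their transposes. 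Combining the first two forces $A^{\rm hom}_{12}=A^{\rm hom}_{13}=0$ and, by symmetry of $A^{\rm hom}$, also $A^{\rm hom}_{21}=A^{\rm hom}_{31}=0$; the third together with the symmetry $A^{\rm hom}_{23}=A^{\rm hom}_{32}$ gives $A^{\rm hom}_{23}=A^{\rm hom}_{32}=0$. This establishes both $A^{\rm hom}_{1l}=A^{\rm hom}_{l1}=0$ for $l\ne1$ and $A^{\rm hom}_{22}=A^{\rm hom}_{33}$, which is exactly the claim for $k=1$. The cases $k=2,3$ are obtained by cyclic permutation of indices.

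There is no real obstacle here: the argument is a one-line application of Proposition \ref{Ahom_property} followed by the routine linear algebra sketched above; this is precisely the content of the paper's remark that the statement follows by ``direct calculation''. The only place where a small amount of care is needed is to verify that the $\pi/2$ rotation $\sigma_1$ really maps $Q=[0,1)^3$ to itself (which it does once $Q$ is viewed as a torus, in the same sense in which the hypothesis $\sigma Q=Q$ is used in Proposition \ref{Ahom_property}), so that the proposition applies.
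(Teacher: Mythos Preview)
Your proposal is correct and follows exactly the approach the paper indicates: the paper gives no separate proof for this corollary, stating only that it follows by ``direct calculation'' analogous to Corollary~\ref{symmetry}, which is precisely the route you take---apply Proposition~\ref{Ahom_property} to obtain the commutation $\sigma_k A^{\rm hom}=A^{\rm hom}\sigma_k$, then read off the entrywise constraints using the explicit $\pi/2$ rotation matrix and the symmetry of $A^{\rm hom}$. Your computation is accurate (and in fact yields the extra relation $A^{\rm hom}_{23}=0$, slightly more than the corollary claims); the torus caveat you raise about $\sigma_k Q=Q$ is equally present, and equally tacit, in the paper's treatment of the $\pi$-rotation case.
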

\begin{prop}
Suppose that the characteristic function $\chi_0$ of the set $Q_0$ satisfies (\ref{asym}) with $a=\chi_0I.$
Then for all $\omega^2\notin\{0\}\cup\{\alpha_k\}_{k=1}^\infty$ the matrix $\Gamma(\omega),$ defined by (\ref{gamma}), 
(\ref{Peqn})--(\ref{cond3}), satisfies the property
\[
\Gamma(\omega)=\sigma\Gamma(\omega)\sigma^{-1}=\sigma^{-1}\Gamma(\omega)\sigma.
\]
\end{prop}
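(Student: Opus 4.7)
\emph{Proof proposal.} The plan is to mirror the argument used for Proposition \ref{Ahom_property}, exploiting the rotational symmetry via a change of variables combined with the uniqueness of the cell problem (\ref{Peqn})--(\ref{cond3}). As a preliminary reduction, for $\xi \in \mathbb{R}^3$ I would set $B_\xi := \sum_{j=1}^3 \xi_j B^j$, which by linearity of (\ref{Peqn})--(\ref{cond3}) is the unique element of ${\mathcal H}_1$ solving the analogous system with right-hand side $\xi$. The definition (\ref{gamma}) of $\Gamma(\omega)$ then reads
\[
\Gamma(\omega)\xi\cdot\eta = \omega^2\xi\cdot\eta + \omega^4\int_Q B_\xi\cdot\eta, \qquad \xi,\eta \in \mathbb{R}^3.
\]
Since $\sigma$ is orthogonal, the assertion $\Gamma(\omega) = \sigma^{-1}\Gamma(\omega)\sigma$ is equivalent to $\Gamma(\omega)\xi\cdot\eta = \Gamma(\omega)\sigma\xi\cdot\sigma\eta$ for all $\xi,\eta$, which in turn reduces (using $\sigma\xi\cdot\sigma\eta = \xi\cdot\eta$) to the single identity $\int_Q B_\xi\cdot\eta = \int_Q B_{\sigma\xi}\cdot\sigma\eta$.

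To establish this identity, I would introduce the auxiliary field $u(y) := \sigma^{-1} B_{\sigma\xi}(\sigma y)$ and verify it satisfies the very system characterising $B_\xi$. The key tools are the standard transformation rules for a proper rotation $\sigma$:
\[
{\rm curl}\bigl(\sigma^{-1}v(\sigma\cdot)\bigr)(y) = \sigma^{-1}({\rm curl}\,v)(\sigma y), \qquad {\rm div}\bigl(\sigma^{-1}v(\sigma\cdot)\bigr)(y) = ({\rm div}\,v)(\sigma y),
\]
which follow from $\det\sigma = 1$ and $\sigma^\top = \sigma^{-1}$. Together with the hypothesis $\chi_0\circ\sigma = \chi_0$ (which entails $\sigma Q_0 = Q_0$ and $\sigma Q_1 = Q_1$) and the corresponding scalar symmetry $\epsilon_0\circ\sigma = \epsilon_0$ tacitly required for the coefficient appearing in (\ref{Peqn}), a direct calculation yields ${\rm curl}(\epsilon_0^{-1}\,{\rm curl}\,u) = \xi + \omega^2 u$ in $Q_0$, ${\rm curl}\,u = 0$ in $Q_1$, and ${\rm div}\,u = 0$ in $Q$. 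Uniqueness in ${\mathcal H}_1$ (valid for $\omega^2 \notin \{\alpha_k\}_{k\in\mathbb{N}}$) then forces $u = B_\xi$; the required integral identity follows by the change of variable $y' = \sigma y$ in $\int_Q \sigma^{-1} B_{\sigma\xi}(\sigma y)\cdot\eta \, dy$, on using $\sigma Q = Q$ and $(\sigma^{-1})^\top = \sigma$. The companion identity $\Gamma(\omega) = \sigma\Gamma(\omega)\sigma^{-1}$ is obtained by running the same argument with $\sigma$ replaced by $\sigma^{-1}$, which satisfies all the same hypotheses.

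The main technical point I anticipate is verifying that $u$ actually belongs to ${\mathcal H}_1$; specifically, that the circulation constraint $a(u) = 0$ is inherited. Unlike ${\rm curl}$ and ${\rm div}$, the functional $a$ is defined via line integrals along the three coordinate axes and is not manifestly invariant under an arbitrary rotation. However, the hypothesis $\sigma Q = Q$ severely constrains $\sigma$ to permute the coordinate directions up to sign; combining this with the $Q$-periodicity of $B_{\sigma\xi}$ and the fact that $\sigma e_i = \pm e_{\pi(i)}$ for some permutation $\pi$, one reduces $a(u) = 0$ to $a(B_{\sigma\xi}) = 0$ by a direct case analysis, absorbing any orientation reversals in the unit interval via periodicity. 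This bookkeeping is the only step where the finite symmetry group of the cube is genuinely used, as opposed to generic rotation calculus.
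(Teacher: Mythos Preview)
Your argument is correct, and it takes a genuinely different route from the paper's. The paper does \emph{not} work with the resolvent-type objects $B^j$ directly; instead it passes to the spectral representation (\ref{me8}) and argues that the eigenfunctions $\phi^k$ of the nonlocal problem (\ref{nonlocal_eq}) are (as a family) invariant under $\sigma$, by checking term-by-term that each integral in the weak form (\ref{Gamma_rot}) transforms correctly---including an explicit appeal to the rotational invariance of the periodic Green function $G$. Your approach bypasses the eigenfunction expansion entirely: you exploit the \emph{uniqueness} of the solution $B_\xi$ for $\omega^2\notin\{\alpha_k\}$ to conclude $\sigma^{-1}B_{\sigma\xi}(\sigma\cdot)=B_\xi$ in one stroke. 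This is cleaner in that it avoids any ambiguity about degenerate eigenspaces (the paper's phrase ``$\phi^k$ are invariant under rotation'' is literally correct only for simple $\alpha_k$, though the conclusion survives), and it mirrors the proof of Proposition \ref{Ahom_property} more faithfully. The paper's route, on the other hand, makes the role of the Green function's symmetry visible and reuses machinery already set up for the spectral analysis. Your handling of the circulation constraint $a(u)=0$---reducing to a signed permutation of the axes via $\sigma Q=Q$ and absorbing orientation reversals by periodicity---is exactly the detail the paper glosses over, and it is the one place where the argument is not pure change-of-variables calculus.
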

\begin{proof}
We make use of the representation (\ref{me8}) for $\Gamma(\omega)$ and of the equations (\ref{nonlocal_eq}) for the 
functions $\phi^k.$ Multiplying (\ref{nonlocal_eq}) by $\psi \in [C^\infty_0(Q_0)]^3$ and integrating by parts yields
\begin{multline}
\int_{Q_0}\epsilon_0^{-1}(y){\rm curl}\,\phi^k(y)\cdot{\rm curl}\,\psi(y)\,dy=\alpha_k\int_{Q_0}\int_{Q_0}{ G}(y-x)\,{\rm div}\,\phi^k(x)\,
{\rm div}\,\psi(y)\,dxdy \\
 +\int_{Q_0}\phi^k(y)\cdot\psi(y)\,dy=0.
\label{Gamma_rot}
\end{multline}
We claim that the functions $\sigma\phi^k$ satisfy the identity (\ref{Gamma_rot}) with $Q_0$ replaced by $\sigma Q_0:=\{x\in Q:
\sigma^{-1}x\in Q_0\}.$ Indeed, treating each of the terms in (\ref{Gamma_rot}) separately, we obtain

 \[
 \int_{\sigma Q_0}\sigma\phi^k(y')\cdot\sigma\psi(y')\,dy'=\int_{\sigma Q_0}\phi^k(y')\cdot\psi(y')\,dy'\stackrel{y'=\sigma y}{=}
 \int_{Q_0}\phi^k(y)\cdot\psi(y)\,dy,
 \]
 \\
\[
\int_{\sigma Q_0}\ep_0^{-1}(y'){\rm curl}(\sigma\phi^k)(y')\cdot{\rm curl}(\sigma\psi)(y')\,dy'\stackrel{y'=\sigma y}{=}\int_{Q_0}\ep_0^{-1}(y){\rm curl}\,\phi^k(y)\cdot{\rm curl}\,\psi(y)\,dy,
\]
and
\begin{gather*}
\int_{\sigma Q_0}\int_{\sigma Q_0}{ G}(y'-x')\,{\rm div}(\sigma\phi^k)(x')\,{\rm div}(\sigma\psi)(y')\,dx'dy'\ \ \ \ \ \ \ \ \ \ \ \ \ \ \ \ \ \ \ \ \ \ \ \ \ \ \ \ \ \ \ \ \ \ \ \ \ \ \ \ \ \ \ \ \ \ \ \ \ \ \ \ \ \ \ \ 
\\
\stackrel{\substack{x'=\sigma x,\\ y'=\sigma y}}{=}\int_{Q_0}\int_{Q_0}{ G}\bigl(\sigma(y-x)\bigr)\,{\rm div}\,\phi^k(x)\,{\rm div}\,\psi(y)\,dxdy
\\
\ \ \ \ \ \ \ \ \ \ \ \ \ \ \ \ \ \ \ \ \ \ \ \ \ \ \ \ \ \ \ \ \ \ \ \ \ \ \ \ \ \ \ \ \ \ \ \ \ \ \ \ \ \ \ \ =\int_{Q_0}\int_{Q_0}{ G}\bigl(y-x)\,{\rm div}\,\phi^k(x)\,{\rm div}\,\psi(y)\,dxdy,
\end{gather*}
where the invariance of the Green function ${G}$, under the rotation $\sigma$, holds due to the assumption.   

Hence, $\phi^k$ are invariant under rotation by $\sigma$ and using the formula (\ref{me8}) yields
\[
\Gamma(\omega)= \omega^2\sigma\sigma^{-1} + \omega^4\sum_{k=1}^\infty \frac{\bigl( \int_{Q_0} \sigma\phi^k\bigr)\otimes\bigl( \int_{Q_0}\sigma\phi^k\bigr)}{\alpha_k - \omega^2}=\sigma\Gamma(\omega)\sigma^{-1},\ \ \ \ \ \ \ \ \ \omega^2\notin\{0\}\cup\{\alpha_k\}_{k=1}^\infty,
\]
as required.
\end{proof}
By analogy with Corollary \ref{symmetry}, Corollary \ref{symmetry.2} we obtain the following statement.
\begin{corollary}
If  \eqref{asym} holds for $\sigma=\sigma_k,$ where $\sigma_k$ is the rotation by $\pi$ around the $x_k$-axis, then
$\Gamma_{kl}(\omega)=0$ for all $l \neq k,$ $\omega^2\notin\{0\}\cup\{\alpha_k\}_{k=1}^\infty.$  Moreover, if $\sigma_k$ is a rotation by $\pi/2$ around the $x_k$-axis, then $\Gamma_{ii}(\omega)= \Gamma_{jj}(\omega)$ for $i,j \neq k$.
\end{corollary}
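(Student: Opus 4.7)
The plan is to invoke the previous proposition, which ensures $\Gamma(\omega)=\sigma\Gamma(\omega)\sigma^{-1}$ whenever the assumption \eqref{asym} holds for the given rotation $\sigma$, and then to read off the required vanishing/equality conditions entry by entry, exactly in parallel with the arguments in Corollary \ref{symmetry} and Corollary \ref{symmetry.2}. Recall also that $\Gamma(\omega)$ is symmetric, as is evident from the representation \eqref{me8}, and that symmetry will be used to collapse off-diagonal equalities into vanishing statements.

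First I would treat the case of a rotation $\sigma_k$ by $\pi$ around the $x_k$-axis. In the standard basis, this $\sigma_k$ is diagonal with $+1$ in the $k$th slot and $-1$ in the other two slots, and $\sigma_k^{-1}=\sigma_k$. Writing out the $(k,l)$ entry of the identity $\Gamma(\omega)=\sigma_k\Gamma(\omega)\sigma_k^{-1}$ for $l\neq k$ gives $\Gamma_{kl}(\omega)=(\sigma_k)_{kk}(\sigma_k)_{ll}\Gamma_{kl}(\omega)=-\Gamma_{kl}(\omega)$, whence $\Gamma_{kl}(\omega)=0$. This is the same elementary verification as in Corollary \ref{symmetry}, now applied to $\Gamma(\omega)$ in place of $A^{\rm hom}$.

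For the case of a rotation $\sigma_k$ by $\pi/2$ around the $x_k$-axis, I would first note that $\sigma_k^2$ is a rotation by $\pi$ around the same axis, and the property \eqref{asym} is preserved under this composition; therefore the first part of the corollary already yields $\Gamma_{kl}(\omega)=0$ for $l\neq k$. To obtain the equality $\Gamma_{ii}(\omega)=\Gamma_{jj}(\omega)$ for the remaining two indices $i,j\neq k$, I would rewrite $\Gamma(\omega)=\sigma_k\Gamma(\omega)\sigma_k^{-1}$ as $\Gamma(\omega)\sigma_k=\sigma_k\Gamma(\omega)$ and compare the $2\times 2$ blocks in the indices $\{i,j\}$. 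On that block, $\sigma_k$ restricts (up to an orientation choice) to $\left(\begin{smallmatrix}0&-1\\1&0\end{smallmatrix}\right)$ and $\Gamma(\omega)$ restricts to $\left(\begin{smallmatrix}\Gamma_{ii}&\Gamma_{ij}\\ \Gamma_{ji}&\Gamma_{jj}\end{smallmatrix}\right)$. Matching the two products forces $\Gamma_{ii}(\omega)=\Gamma_{jj}(\omega)$ together with $\Gamma_{ij}(\omega)=-\Gamma_{ji}(\omega)$, and the latter combined with the symmetry of $\Gamma(\omega)$ also gives $\Gamma_{ij}(\omega)=0$, as expected.

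There is no real obstacle here: every step is a direct entry-by-entry computation using the symmetry identity supplied by the previous proposition and the symmetry of $\Gamma(\omega)$. The only small care needed is to verify that \eqref{asym} (i.e.\ invariance of $\chi_0 I$) is inherited by the square of a $\pi/2$-rotation before citing the first half of the corollary, but this is immediate since $\sigma_k^2$ acts in the same coordinate hyperplane as $\sigma_k$ and $\chi_0$ is a scalar-valued function invariant under $\sigma_k$.
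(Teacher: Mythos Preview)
Your proposal is correct and is precisely the argument the paper has in mind: the paper itself offers no details beyond the sentence ``By analogy with Corollary \ref{symmetry}, Corollary \ref{symmetry.2} we obtain the following statement,'' and your entry-by-entry verification from the identity $\Gamma(\omega)=\sigma_k\Gamma(\omega)\sigma_k^{-1}$ supplied by the preceding proposition is exactly that analogy spelled out. The only minor redundancy is that in the $\pi/2$ case the direct commutation already forces $\Gamma_{kl}(\omega)=0$ for $l\neq k$ without first passing to $\sigma_k^2$, but your route via $\sigma_k^2$ is equally valid.
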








\section{Achknowledgements}
KDC and SC are grateful for the financial support of the Leverhulme Trust (Grant RPG--167 ``Dissipative and non-self-adjoint problems'') and the Engineering and Physical Sciences Research Council (Grant EP/L018802/2 ``Mathematical foundations of metamaterials: homogenisation, dissipation and operator theory'', and Grant EP/M017281/1 ``Operator asymptotics, a new approach to length-scale interactions in metamaterials.")
  
 We would also like to thank Valery Smyshlyaev for helpful discussions on the subject of this manuscript.



\end{document}